\newcommand{\Var}{{\rm{Var}_{\mathbb{C}}}}
\newcommand{\Vect}{{\rm{Vect}_{\mathbb{C}}}}
\newcommand{\ind}{{\mbox{\rm ind}}}
\newcommand{\diag}{{\mbox{\rm diag}}}
\def\1{\underline{1}}
\def\LLL{{\mathbb L}}
\def\Z{{\mathbb Z}}
\def\Q{{\mathbb Q}}
\def\C{{\mathbb C}}
\def\S{{\mathcal S}}
\def\calX{{\mathcal X}}
\def\calY{{\mathcal Y}}
\DeclareMathOperator{\spec}{Spec}
\def\ind{{\rm ind}}
\newtheorem{theorem}{Theorem}
\newtheorem{lemma}{Lemma}
\newtheorem{proposition}{Proposition}
\newtheorem{definition}{Definition}
\newenvironment{corollary}
{\smallskip\noindent{\bf Corollary\/}.}{\smallskip\par}
\newenvironment{remark}
{\smallskip\noindent{\bf Remark\/}.}{\smallskip\par}
\newenvironment{remarks}
{\smallskip\noindent{\bf Remarks\/}.}{\smallskip\par}
\newenvironment{proof}
{\noindent{\bf Proof\/}.}{{ $\square$}\smallskip\par}
\title{Grothendieck ring of varieties with finite groups actions
\footnote{Math. Subject Class.: 14F30, 18F30, 55M35. Keywords: 
finite group actions, complex quasi-projective varieties, Grothendieck rings,
lambda-structure, power structure.}
}
\author{S.M.~Gusein-Zade \thanks{The work of the first author
(Sections~\ref{sec:Intro}, \ref{sec:Orbifold_Euler}, \ref{sec:power_K^fGr}, and~\ref{sec:Groth_vector}) 
was supported by the grant 16-11-10018 of the Russian Science Foundation.
Address: Moscow State University, Faculty
of Mechanics and Mathematics, GSP-1, Moscow, 119991, Russia. E-mail:
sabir\symbol{'100}mccme.ru} \and I.~Luengo \thanks{The last two authors were partially
supported by a competitive Spanish national grant MTM2016-76868-C2-1-P.
Address:  ICMAT (CSIC-UAM-UC3M-UCM), Dept. of Algebra, Complutense University of Madrid,
Plaza de Ciencias 3, Madrid, 28040, Spain.
E-mail: iluengo\symbol{'100}mat.ucm.es} \and
A.~Melle-Hern\'andez \thanks{Address:  Instituto de Matem\'a¡tica Interdisciplinar (IMI), Dept. of Algebra, Complutense University of Madrid,
Plaza de Ciencias 3, Madrid, 28040, Spain. E-mail: amelle\symbol{'100}mat.ucm.es}}
\date{}
\begin{document}
\def\eps{\varepsilon}

\maketitle

\begin{abstract}
We define a Grothendieck ring of varieties with finite groups actions and show that the
orbifold Euler characteristic and the Euler characteristics of higher orders can be defined
as homomorphisms from this ring to the ring of integers. We describe two natural $\lambda$-structures
on the ring and the corresponding power structures over it and show that one of these power structures
is effective. We define a Grothendieck ring of varieties with equivariant vector
bundles and show that the generalized (``motivic'') Euler characteristics of higher orders can be defined
as homomorphisms from this ring to the Grothendieck ring of varieties extended by powers of 
the class of the complex affine line.
We give an analogue of the Macdonald type formula for the generating series of the
generalized higher order Euler characteristics of wreath products.
\end{abstract}

\section{Introduction}\label{sec:Intro}
The Euler characteristic $\chi(\cdot)$ (defined as the alternating sum of cohomology groups with compact
support) is an \emph{additive} invariant of topological spaces (sufficiently nice, e.~g., quasi-projective varieties).
It can be considered as a homomorphism from the Grothendieck ring $K_0(\Var)$ of quasi-projective varieties
to the ring $\Z$ of integers. There exists a simple formula for the generating series of the Euler
characteristics of the symmetric powers $S^nX=X^n/S_n$ of a variety $X$:
\begin{equation}\label{eq:Macdonald}
 1+\chi(X)t+\chi(S^2X)t^2+\chi(S^3X)t^3+\ldots=(1-t)^{-\chi(X)}
\end{equation}
(the Macdonald formula). (One can interpret this formula as the fact that the Euler characteristic is
a $\lambda$-ring homomorphism between the rings $K_0(\Var)$ and $\Z$ endowed with natural $\lambda$-structures:
see Section~\ref{sec:Power_lambda}).

For a topological space $X$ with an action of a finite group $G$, one has the notions of the orbifold Euler
characteristic $\chi^{\rm orb}(X,G)$ (coming from physics) and of Euler characteristics $\chi^{(k)}(X,G)$
of higher orders ($\chi^{\rm orb}(X,G)=\chi^{(1)}(X,G)$). They can be considered as ($\Z$-valued) functions
on the Grothendieck ring $K_0^G(\Var)$ of $G$-varieties. However, they are not ring homomorphisms
from $K_0^G(\Var)$ to $\Z$.

In \cite{GPh}, there were defined generalized (``motivic'') versions of the orbifold Euler characteristic
and of Euler characteristics of higher orders with values in the Grothendieck ring $K_0(\Var)$ of varieties
extended by the rational powers of the class $\LLL$ of the complex affine line.
They are defined for a non-singular (!) variety with an action of a finite group $G$. They are not defined as
functions on a certain ring (say, on the Grothendieck ring of $G$-varieties). There was obtained a Macdonald
type formula for the generating series of generalized higher order Euler characteristics of the Cartesian
powers of a $G$-manifold with the actions of the wreath products $G_n$ on them. It is formulated in terms of
the so-called power structure over the ring $K_0(\Var)[\LLL^s]$. (A power structure over a ring $R$ is a
method to give sense to an expression of the form $(1+a_1t+a_2t^2+\ldots)^m$, where $a_i$ and $m$ are elements
of the ring $R$: \cite{GLM-MRL}, see also Section~\ref{sec:Power_lambda}.)

Here we define a Grothendieck ring $K^{\rm fGr}_0(\Var)$ of varieties with finite groups actions.
Elements of $K^{\rm fGr}_0(\Var)$ are classes of varieties with actions of finite groups (different, in general)
on subvarieties constituting partitions of them. The most important ingredient of the definition is
the identification of the class of a variety with a group action with the class of the variety obtained by
the induction operation (with an action of a bigger group).
Another description of this ring gives elements of it as
classes of $G$-varieties with ``a group $G$ big enough''. More precisely they are elements of the injective
limit of the Grothendieck groups (not rings!) of $G$-varieties with respect to the inclusion of finite groups.

We show that the orbifold Euler characteristic and the Euler characteristics of higher orders can be defined
as homomorphisms from $K^{\rm fGr}_0(\Var)$ to the ring $\Z$ of integers.
We describe two natural $\lambda$-structures on the ring
$K^{\rm fGr}_0(\Var)$. These $\lambda$-structures define different power structures over this ring.
We show that one of this power structures is not effective (see the definition in
Section~\ref{sec:Power_lambda}) and the other one is. We give a geometric description of the effective
power structure. We define a Grothendieck ring $K^{\rm fGr}_0(\Vect)$ of varieties with equivariant vector
bundles and show that the generalized Euler characteristics of higher orders can be defined as homomorphisms
from $K^{\rm fGr}_0(\Vect)$ to $K_0(\Var)[\LLL^s]$.
We give an analogue of the formula from \cite{GPh} for the generating series of the
generalized higher order Euler characteristics of wreath products.

\section{Orbifold Euler characteristics and their generalized versions}\label{sec:Orbifold_Euler}
For a finite group $G$, let ${\rm Cong\,}G$ be the set of the conjugacy classes of elements of $G$,
for an element $g\in G$ let $C_G(g)=\{h\in G: h^{-1}gh=g\}$ be the centralizer of $g$.
For a $G$-space $X$ and for a subgroup $H\subset G$, let $X^H=\{x\in X: gx=x \text{\ \ for all\ \ } g\in H\}$
be the fixed point set of the subgroup $H$.
The {\em orbifold Euler characteristic}
$\chi^{orb}(X,G)$ of the $G$-space $X$ is defined, e.~g., in \cite{AS}, \cite{HH}:
\begin{equation}\label{chi-orb}
\chi^{orb}(X,G)=
\frac{1}{\vert G\vert}\sum_{{(g_0,g_1)\in G\times G:}\atop{\\g_0g_1=g_1g_0}}\chi(X^{\langle g_0,g_1\rangle})
=\sum_{[g]\in {{\rm Conj\,}G}} \chi(X^{\langle g\rangle}/C_G(g))\in \Z\,,
\end{equation}
where $g$ is a representative of a class $[g]$, $\langle g\rangle$ and $\langle g_0,g_1\rangle$
are the subgroups of $G$ generated by the corresponding elements.

The {\em higher order Euler characteristics} of $(X,G)$ were defined in \cite{AS} and \cite{BrF} by:
\begin{equation}\label{chi-k-orb}
 \chi^{(k)}(X,G)=
\frac{1}{\vert G\vert}\sum_{{{\bf g}\in G^{k+1}:}\atop{g_ig_j=g_jg_i}}\chi(X^{\langle {\bf g}\rangle})
=\sum_{[g]\in {{\rm Conj\,}G}} \chi^{(k-1)}(X^{\langle g\rangle}, C_G(g))\,,
\end{equation}
where $k$ is a positive integer (the order of the Euler characteristic),
${\bf g}=(g_0,g_1, \ldots, g_k)$, $\langle{\bf g}\rangle$ is the subgroup 
generated by $g_0,g_1, \ldots, g_k$, and (for the second, recurrent, definition)
$\chi^{(0)}(X,G)$ is defined as the usual Euler characteristic $\chi(X/G)$ of the quotient.
The orbifold Euler characteristic $\chi^{orb}(X,G)$
is the Euler characteristic $\chi^{(1)}(X,G)$ of order $1$.

For a $G$-variety $X$, the cartesian power $X^n$ carries an action of the wreath product
$G_n=G^n \rtimes S_n$ generated by the natural action of the symmetric group $S_n$ (permuting
the factors) and by the natural (component-wise) action of the Cartesian power $G^n$.
The pair $(X_n, G_n)$ should be (or can be) considered as an analogue of the symmetric power
for the pair $(X,G)$.

For $k\ge 0$ one has the following Macdonald type formula (see \cite[Theorem A]{Tamanoi})
\begin{equation}\label{higher_generating}
1+\sum_{n=1}^{\infty}\chi^{(k)}(X^n, G_n)\cdot  t^n
=\left(
\prod\limits_{r_1, \ldots,r_k\geq 1}\left(1-t^{r_1r_2\cdots r_k}\right)^{r_2r_3^2\cdots r_k^{k-1}}
\right)
^{-\chi^{(k)}(X, G)}\,.
\end{equation}
When $k=0$, one gets the standard Macdonald formula (Equation~(\ref{eq:Macdonald})) for the quotient $X/G$.

There is a (more or less) natural notion of the Grothendieck ring $K_0^G(\Var)$ of (complex quasi-projective)
$G$-varieties such that the orbifold Euler characteristic $\chi^{\rm orb}(\cdot)$ and the Euler
characteristics of higher orders $\chi^{(k)}(\cdot)$ are functions on it.
{\em The Grothendieck ring $K_0^G(\Var)$ of complex quasi-projective $G$-varieties} is the Abelian group
generated by the $G$-isomorphism classes $[X,G]$ of complex quasi-projective varieties
$X$ with $G$-actions modulo the relation:
$[X,G]=[Y,G]+[X\setminus Y,G]$ for a Zariski closed $G$-invariant subvariety $Y$ of $X$.
The multiplication in $K_0^{G}(\Var)$ is defined by the Cartesian product with the diagonal $G$-action.

\begin{remark}
Usually, in the definition of the Grothendieck ring of complex quasi-projective $G$-varieties,
one adds the following relation:
if $E\to X$ is a $G$-equivariant vector bundle of rank $n$, then $[E]=\LLL^n\cdot[X,G]$,
where $\LLL$ is the class of the complex affine line with the trivial $G$-action.
We use the definition given, e.~g., in \cite{Mazur}. In \cite{bittner} this Grothendieck ring
was also defined (alongside with the ``traditional one'') and was denoted by $K_0^{{\,}', G}(\Var)$. 
\end{remark}

One can easily understand that $\chi^{(k)}$ are additive functions on $K_0^G(\Var)$, however, they are not
multiplicative. This can be seen, e.~g., from the fact that $\chi^{(k)}(1)=\vert G\vert$ ($1\in K_0^G(\Var)$
is the class of the one-point variety with the only $G$-action on it). Thus they are not ring homomorphisms
from $K_0^G(\Var)$ to $\Z$. In what follows we, in particular, define a Grothendieck ring (so-called
Grothendieck ring of varieties with finite group{\bf s} actions) such that $\chi^{\rm orb}$ and
$\chi^{(k)}$ are ring homomorphisms from it to $\Z$.

Let $K_0(\Var)[\LLL^s]_{s\in\Q}$ (or $K_0(\Var)[\LLL^s]$ for short) be the modification of
the Grothendieck ring $K_0(\Var)$ of quasi-projective varieties obtained by adding all rational
powers of the class $\LLL$ of the complex affine line. The elements
of $K_0(\Var)[\LLL^s]$ are in a bijective correspondence with the finite sums of the form
$\sum_i c_i \LLL^{r_i}$, where $c_i$ are elements of the localization $K_0(\Var)_{(\LLL)}$
of the ring $K_0(\Var)$ by the class $\LLL$, $r_i$ are different rational numbers inbetween
$0$ and $1$: $0\le r_i <1$. Thus the ring $K_0(\Var)[\LLL^s]$ contains the localization
$K_0(\Var)_{(\LLL)}$. It was shown (L.~Borisov) that $\LLL$ is a zero divisor in $K_0(\Var)$
and therefore the natural map $K_0(\Var)\to K_0(\Var)_{(\LLL)}$ is not injective.
Therefore the natural map $K_0(\Var)\to K_0(\Var)[\LLL^s]$ is not injective as well.

Let $X$ be a non-singular complex quasi-projective variety of dimension $d$ with an (algebraic) action of
the group $G$. To define the  {\em higher order generalized (``motivic'') Euler characteristics}
of the pair $(X,G)$, one has to use the so called age (or fermion shift) ${\rm age}_x(g)$ of an element
$g\in G$ at a fixed point $x$ of $g$ defined in \cite{Zaslow}, \cite{Ito-Reid}.
The element $g$ acts on  the tangent space $T_xX$ as an automorphism of finite order. This action
on $T_xX$ can be represented by a diagonal matrix
$\diag(\exp(2\pi i \theta_1), \ldots, \exp(2\pi i \theta_d))$
with $0\le\theta_j<1$ for $j=1,2, \ldots, d$ ($\theta_j$ are rational numbers). The {\em age}
of the element $g$ at the point $x$
is defined by ${\rm age}_x(g)=\sum_{j=1}^{d}\theta_j\in\Q_{\ge 0}$. 
For a rational number $q$, let $X^{\langle g\rangle}_q$ be the set of points $x\in X^{\langle g\rangle}$
such that ${\rm age}_x(g)=q$.

For a rational number $\varphi_1\in \Q$, the generalized orbifold Euler characteristic
of weight $\varphi_1$ of the pair $(X,G)$ is defined by
\begin{equation}\label{generalized_orbifold}
[X,G]_{\varphi_1}:=\sum_{[g]\in {{\rm Conj\,}G}}\sum_{q\in \Q}
[X^{\langle g\rangle}_{q}/C_G(g)]\cdot\,\LLL^{\varphi_1 q}\in 
K_0(\Var)[\LLL^{s}]\,.
\end{equation}
(See an explanation for introducing the weight $\varphi_1$ in \cite{GPh}. Generalized orbifold Euler
characteristic is meaningful for at least two values of the weight $\varphi_1$: $0$ and $1$.)
Equation~(\ref{generalized_orbifold}) is a reformulation of the definition from~\cite{Wang}
given in terms of the orbifold Hodge--Deligne polynomial.

The generalized Euler characteristics of higher orders are defined recursively by an equation
which is a sort of ``a motivic version'' of the second equality in (\ref{chi-k-orb}) taking into
account ages of elements: see~\cite{GPh} for the details or Section~\ref{sec:Groth_vector} for
a somewhat more general definition. Since all of them are defined only for smooth varieties, they
are not functions on a certain ring (say, on a Grothendieck ring of $G$-varieties).
In Section~\ref{sec:Groth_vector} we define a Grothendieck ring (the Grothendieck ring of varieties with
equivariant vector
bundles) such that (appropriately defined) generalized Euler characteristics of higher orders
are ring homomorphisms from this Grothendieck ring to $K_0(\Var)[\LLL^{s}]$.

A Macdonald type formula for the generalized Euler characteristics of higher orders (a ``motivic'' version
of~(\ref{higher_generating})) is written in terms of the power structure over the ring $K_0(\Var)[\LLL^{s}]$).

\section{$\lambda$-structures and power structures}\label{sec:Power_lambda}

A Macdonald type equation for an invariant taking values in a certain ring can be formulated in terms of
a power structure over the ring of values of the invariant. Let $R$ be a commutative ring
with unity. A power structure over the ring $R$ is a method to give sense to expressions of the form
$(A(t))^m$, where $A(t)=1+a_1t+a_2t^2+\ldots$ is a power series with the coefficients $a_i$ from $R$
and $m$ is an element of $R$.

\begin{definition}\cite{GLM-MRL}
 A {\em power structure} over the ring $R$ is a map
 $$
 \left(1+tR[[t]]\right)\times R\to 1+tR[[t]]\quad\quad
 ((A(t),m)\mapsto\left(A(t)\right)^m)
 $$
 possessing the properties of the exponential function, namely:
 \begin{enumerate}
\item[(1)]  $\left(A(t)\right)^0=1$;
\item[(2)]  $\left(A(t)\right)^1=A(t)$;
\item[(3)]  $\left(A(t)\cdot B(t)\right)^{m}=\left(A(t)\right)^{m}\cdot \left(B(t)\right)^{m}$;
\item[(4)]  $\left(A(t)\right)^{m+n}=\left(A(t)\right)^{m}\cdot \left(A(t)\right)^{n}$;
\item[(5)]  $\left(A(t)\right)^{mn}=\left(\left(A(t)\right)^{n}\right)^{m}$;
\item[(6)]  $(1+a_1t+\ldots)^m=1+ma_1t+\ldots$;
\item[(7)]  $\left(A(t^k)\right)^m =
\left(A(t)\right)^m\raisebox{-0.5ex}{$\vert$}{}_{t\mapsto t^k}$.
 \end{enumerate}
 \end{definition}

Let $\mathfrak{m}$ be the ideal $tR[[t]]$ in the ring $R[[t]]$.
 A power structure over the ring $R$ is {\em finitely determined} if, for any $k\ge 1$, the fact that
 $A(t)\in 1+{\mathfrak{m}}^k$ implies that $\left(A(t)\right)^m\in 1+{\mathfrak{m}}^k$.

The natural power structure over the ring $\Z$ of integers is defined by the standard formula
for a power of a series:
\begin{eqnarray*}
 &\ &(1+a_1t+a_2t^2+\ldots)^m=\\
 &=&1+\sum_{k=1}^{\infty}
 \left(\sum_{\{k_i\}:\sum ik_i=k}\frac{m(m-1)\cdots(m-\sum_i k_i +1)\cdot\prod_i a_i^{k_i}}
 {\prod_i k_i!}\right)\cdot t^k.
\end{eqnarray*}

A power structure over the Grothendieck ring $K_0(\Var)$ of complex quasi-projective varieties
was defined in \cite{GLM-MRL} by the formula
\begin{eqnarray}\label{geometric}
 &\ &(1+[A_1]t+[A_2]t^2+\ldots)^{[M]}=\nonumber\\
 &=&1+\sum_{k=1}^{\infty}
 \left(\sum_{\{k_i\}:\sum ik_i=k}
 \left[\left(\left(M^{\sum_i k_i}\setminus\Delta\right)\times\prod_i A_i^{k_i}\right)\left/
 {\prod_i S_{k_i}}\right.\right]\right)\cdot t^k,\label{Power}
\end{eqnarray}
where $A_i$, $i=1, 2, \ldots$, and $M$ are quasi-projective varieties ($[A_i]$ and $[M]$ are
their classes in the ring $K_0(\Var)$), $\Delta$ is the large diagonal in $M^{\sum_i k_i}$,
that is, the set of (ordered) $\left(\sum_i k_i\right)$-tuples of points of $M$ with at least
two coinciding ones, the group $S_{k_i}$ of permutations on $k_i$ elements acts by the
simultaneous permutations on the components of the corresponding factor $M^{k_i}$ in
$M^{\sum_i k_i}=\prod_i M^{k_i}$ and on the components of $A_i^{k_i}$.

Except the Grothendieck ring of complex quasi-projective varieties one can consider {\em the
Grothendieck semiring} $S_0(\Var)$. It is defined in the same way as $K_0(\Var)$ with the word {\em group}
substituted by the word {\em semigroup}. Two complex quasi-projective varieties $X$ and $Y$ represent the same
element of the semiring $S_0(\Var)$ if and only if they are piece-wise isomorphic, that is if there
exist decompositions $X =\bigsqcup_{i=1}^sX_i$ and $Y =\bigsqcup_{i=1}^sY_i$ into Zariski locally
closed subsets such that $X_i$ and $Y_i$ are isomorphic for $i = 1, \ldots , s$. There is a natural map
(a semiring homomorphism) from $S_0(\Var)$ to $K_0(\Var)$. (It is not known whether or not this
map is injective.)

A power structure over the Grothendieck ring $K_0(\Var)$ is called {\em effective} if the
fact that all the coefficients $a_i$ of the series $A(t)$ and the exponent $m$ are represented
by classes of complex quasi-projective varieties (i.~e., belong to the image of the map
$S_0(\Var)\to K_0(\Var)$) implies that all the coefficients of the series $\left( A(t)\right)^m$
are also represented by such classes. (Roughly speaking this means that the power structure can be
defined over the Grothendieck semiring $S_0(\Var)$.) The same concept can be considered for Grothendieck
rings of complex quasi-projective varieties with additional structures. The effectiveness of the
described power structure over the Grothendieck ring $K_0(\Var)$ is clear from Equation~(\ref{geometric}).

The notion of a power structure over a ring is closely related with the notion of a $\lambda$-ring
structure. A $\lambda$-ring structure (or a pre-$\lambda$-ring structure in a certain terminology,
see, e.~g., \cite{Knutson})
is an additive-to-multiplicative homomorphism $R \to 1+tR[[t]], a\mapsto \lambda_a(t)$ 
($\lambda_{a+b}(t)=\lambda_a(t) \cdot \lambda_b(t)$) such that $\lambda_a(t)=1+at +\ldots$
A $\lambda$-ring structure on a ring defines a finitely determined power structure over it
in the following way. Any series $A(t)\in 1+tR[[t]]$ can be in  a unique way
represented as $\prod_{i=1}^{\infty}\lambda_{b_i}(t^i)$, for some $b_i\in R$.
Then one defines  $\left(A(t)\right)^m:=\prod_{i=1}^{\infty}\lambda_{m b_i}(t^i).$
On the other hand, in general, there are many $\lambda$-ring structures corresponding to one
power structure over a ring. One can show that the power structure~(\ref{geometric}) is defined by
the $\lambda$-ring structure on the Grothendieck ring $K_0(\Var)$ given by the Kapranov zeta function 
$$
\zeta_{[M]}(t):=1+\sum_{k=1}^\infty [S^k M] \cdot t^k,
$$
where $S^k M=M^k/S_k$ is the $k$th symmetric power of the variety $M$.
This follows from the following equation
$$
\zeta_{[M]}(t)=(1-t) ^{-[M]}=(1+t+t^2+\ldots)^{[M]}.
$$
The power structure~(\ref{geometric}) over the Grothendieck ring $K_0(\Var)$ is also defined by
the following $\lambda$-ring structure on it. Let $B^k M=(M^k\setminus \Delta)/S_k$ be the configuration space
of $k$-point subsets of $M$ ($\Delta$ is the big diagonal in $M^k$ consisting on $k$-tuples of points of $M$
with at least two coinciding ones). The series
$$
\lambda_{[M]}(t):=1+\sum_{k=1}^\infty [B_k M] \cdot t^k
$$
gives a $\lambda$-ring structure on the Grothendieck ring $K_0(\Var)$ which defines the same
power structure~(\ref{geometric}) over it. In terms of the power structure one has
$\lambda_{[M]}(t)=(1+t)^{[M]}$.

Alongside with a $\lambda$-structure on a ring (defined by a series $\lambda_a(t)$) one has the so-called
opposite $\lambda$-structure defined by the series $\lambda'_a(t):=(\lambda(-t))^{-1}$. One can show that
the power structure over the ring $K_0(\Var)$ defined by the $\lambda$-structures opposite to
$\zeta_{[M]}(t)$ and $\lambda_{[M]}(t)$ is not effective:~\cite{Stacks}.

\section{Grothendieck ring of varieties with finite groups actions}\label{sec:Groth_finite_groups}
\begin{definition}\label{def:fin-groups}
A {\em quasi-projective variety $\calX$ with a finite groups action} is a variety represented as the disjoint
union of (locally closed) subvarieties $X_i$, $i=1, \ldots, s$, with (left) actions of finite groups $G_i$
on them. 
\end{definition}

This means that $\calX$ can be decomposed into parts with actions of (different, in general) finite groups
on them. We shall write $\calX=\bigsqcup\limits_{i=1}^s (X_i,G_i)$. A partition of $\calX$ means partitions
of its components $X_i$ as $G_i$-varieties. In particular a $G$-variety ($G$ is a finite group) is a
variety with a finite groups action. For short we will call varieties of this sort {\em varieties with pure
actions}.

\begin{definition}\label{def:fGr-isomorphic}
Two varieties with pure actions $(Z,G)$ and $(Z',G')$ are {\em isomorphic} if there exist isomorphisms
$\varphi:G\to G'$ (of finite groups) and $\psi:Z\to Z'$ (of quasi-projective varieties) such that $\psi$
is equivariant relative to $\varphi$, that is, $\psi(gx)=\varphi(g)\psi(x)$ for $x\in Z$, $g\in G$.
\end{definition}

\begin{definition}\label{def:fGr-equivalent}
Two varieties with finite groups actions $\calX$ and $\calY$ are called {\em equivalent} if there exist partitions
$\calX=\bigsqcup\limits_{i=1}^N(X_{(i)}, G_{(i)})$ and $\calY=\bigsqcup\limits_{i=1}^N(Y_{(i)}, G'_{(i)})$
such that $(X_{(i)}, G_{(i)})$ is isomorphic to $(Y_{(i)}, G'_{(i)})$ for $i=1, \ldots, N$.
\end{definition}

There exist a somewhat natural notion of the Grothendieck ring of varieties with a finite groups actions: see
below. However, it does not really correspond to our aim. Because of that here we will use the name
pre-Grothendieck ring.

\begin{definition}\label{def:pre-Groth-fGr} {\em The pre-Grothendieck ring of quasi-projective varieties with
finite groups actions} is the Abelian group $\widetilde{K}_0^{\rm fGr}(\Var)$ generated by the classes
$[\calX]$ of (quasi-projective) varieties with finite groups actions modulo the following relations:
 \begin{enumerate}
 \item[(1)] if quasi-projective varieties with finite groups actions $\calX$ and $\calY$ are equivalent, then
 $[\calX]=[\calY]$;
 \item[(2)] if $\calY$ is a Zariski closed subvariety invariant with respect to the groups actions  of $\calX$, 
then
 $[\calX]=[\calY]+[\calX\setminus \calY]$.
\end{enumerate}
The multiplication in $\widetilde{K}_0^{\rm fGr}(\Var)$ is defined by the Cartesian product of 
varieties with the natural finite groups action on it.
\end{definition}

In particular, for varieties with pure actions, one has
$$
\left[(Z_1,G_1)\right]\cdot\left[(Z_2,G_2)\right]=\left[(Z_1\times Z_2,G_1\times G_2)\right]
$$
with the natural (diagonal) action of $G_1\times G_2$. 
The unit element 
in $\widetilde{K}_0^{\rm fGr}(\Var)$ is $1=[ (\spec(\, \C\, ), (e))]$, the class of the one-point variety with the action of the group
with one element. The ring $\widetilde{K}_0^{\rm fGr}(\Var)$ (as an Abelian group) is generated by
the classes $[(Z,G)]$ of varieties with pure actions.

It is easy to see that the orbifold Euler characteristic $\chi^{\rm orb}$ and the Euler characteristics
$\chi^{(k)}$ of higher order can be defined as functions on the Grothendieck ring
$\widetilde{K}_0^{\rm fGr}(\Var)$. Moreover, the following statement implies that they are (ring)
homomorphisms from $\widetilde{K}_0^{\rm fGr}(\Var)$ to $\Z$.

\begin{proposition}\label{prop:tamanoi} (see, e.~g., \cite[Proposition 2--1]{Tamanoi})
For two varieties with pure actions $(Z,G)$ and $(Z',G')$ one has
$$
\chi^{(k)}(Z\times Z',G\times G')=\chi^{(k)}(Z,G)\cdot\chi^{(k)}(Z',G')\,.
$$
\end{proposition}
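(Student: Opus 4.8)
The plan is to prove the multiplicativity
$$
\chi^{(k)}(Z\times Z',G\times G')=\chi^{(k)}(Z,G)\cdot\chi^{(k)}(Z',G')
$$
directly from the first (summation) definition in~(\ref{chi-k-orb}), by exploiting the way commuting tuples and fixed-point sets factor over a direct product of groups. First I would write the left-hand side out explicitly. For the product group $G\times G'$, an element of $(G\times G')^{k+1}$ is a pair $(\mathbf{g},\mathbf{g}')$ with $\mathbf{g}\in G^{k+1}$ and $\mathbf{g}'\in (G')^{k+1}$, and two such elements $(g_i,g'_i)$ and $(g_j,g'_j)$ commute in $G\times G'$ if and only if $g_ig_j=g_jg_i$ in $G$ \emph{and} $g'_ig'_j=g'_jg'_i$ in $G'$. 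Hence the index set of commuting $(k+1)$-tuples in $(G\times G')^{k+1}$ is exactly the product of the corresponding index sets for $G$ and for $G'$. This is the decisive structural observation, and everything else is bookkeeping around it.

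Next I would identify the fixed-point set. The product variety $Z\times Z'$ carries the product action, so for a subgroup of the form $\langle(\mathbf{g},\mathbf{g}')\rangle$ one has
$$
(Z\times Z')^{\langle(\mathbf{g},\mathbf{g}')\rangle}=Z^{\langle\mathbf{g}\rangle}\times (Z')^{\langle\mathbf{g}'\rangle},
$$
because a point $(z,z')$ is fixed by the generated subgroup precisely when $z$ is fixed by $\langle\mathbf{g}\rangle$ and $z'$ is fixed by $\langle\mathbf{g}'\rangle$. Here I must be slightly careful: the subgroup $\langle(\mathbf{g},\mathbf{g}')\rangle$ of $G\times G'$ is in general a proper subgroup of $\langle\mathbf{g}\rangle\times\langle\mathbf{g}'\rangle$, but its fixed-point set on the product is nonetheless the product of the fixed-point sets, since the action of the generated subgroup is the restriction of the product action and being fixed by all generators is equivalent to being fixed by the subgroup. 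Combining this with the multiplicativity of the Euler characteristic under Cartesian products, $\chi(U\times V)=\chi(U)\cdot\chi(V)$, gives
$$
\chi\big((Z\times Z')^{\langle(\mathbf{g},\mathbf{g}')\rangle}\big)=\chi(Z^{\langle\mathbf{g}\rangle})\cdot\chi((Z')^{\langle\mathbf{g}'\rangle}).
$$

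Finally I would assemble the sum. Using $|G\times G'|=|G|\cdot|G'|$ together with the factorization of both the index set and the summand, the normalized sum over commuting tuples in $(G\times G')^{k+1}$ splits as a product of the two normalized sums, which are precisely $\chi^{(k)}(Z,G)$ and $\chi^{(k)}(Z',G')$. The step I expect to require the most care is the fixed-point identity for the generated subgroup noted above — making explicit that fixing all $k+1$ generators is equivalent to fixing the whole subgroup they generate, and that this translates cleanly into the product decomposition of the fixed locus. Once that is in place the computation is a routine interchange of products and sums. I would also remark that the alternative recursive definition in~(\ref{chi-k-orb}) could be used to give an inductive proof on $k$, with the base case $k=0$ being $\chi\big((Z\times Z')/(G\times G')\big)=\chi(Z/G)\cdot\chi(Z'/G')$, but the direct summation approach is cleaner and avoids tracking centralizers through the induction.
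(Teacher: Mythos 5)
Your proof is correct: the factorization of commuting $(k+1)$-tuples in $(G\times G')^{k+1}$, the identification $(Z\times Z')^{\langle(\mathbf{g},\mathbf{g}')\rangle}=Z^{\langle\mathbf{g}\rangle}\times (Z')^{\langle\mathbf{g}'\rangle}$ (with your careful observation that being fixed by all generators suffices, even though $\langle(\mathbf{g},\mathbf{g}')\rangle$ may be a proper subgroup of $\langle\mathbf{g}\rangle\times\langle\mathbf{g}'\rangle$), and the multiplicativity of $\chi$ on Cartesian products are exactly what is needed. The paper gives no proof of its own, citing \cite[Proposition 2--1]{Tamanoi} instead, and your summation argument is essentially the standard one given there, with the $k=0$ case (where~(\ref{chi-k-orb}) is replaced by $\chi^{(0)}(X,G)=\chi(X/G)$) covered by the product decomposition $(Z\times Z')/(G\times G')=(Z/G)\times(Z'/G')$ that you note at the end.
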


Nevertheless it is not clear whether the ring $\widetilde{K}_0^{\rm fGr}(\Var)$ can be endowed with a
(natural) $\lambda$-structure and therefore it is not possible to try to consider $\chi^{\rm orb}$ and
$\chi^{(k)}$ as $\lambda$-ring homomorphisms. To make this possible we have to introduce a sort of
a reduction of the ring $\widetilde{K}_0^{\rm fGr}(\Var)$.

Let $Z$ be a $G$-variety and let $H$ be a finite group such that $G\subset H$. There is a natural
{\em induction operation} which produces an $H$-variety. Consider the following equivalence relation on
$H\times Z$: $(h_1, x_1)\sim(h_2, x_2)$ ($x_i\in Z$, $h_i\in H$) if and only if there exists $g\in G$ such that
$h_2=h_1g^{-1}$, $x_2=gx_1$. The quotient $\ind_G^H Z:=(H\times Z)/\sim$ carries a natural $H$-action.
The map $x\mapsto (1,x)$ is an embedding of $Z$ into $\ind_G^H Z$ (as a $G$-variety).

\begin{definition}\label{def:Groth-fGr} {\em The Grothendieck ring of quasi-projective varieties with
finite groups actions} is the Abelian group $K_0^{\rm fGr}(\Var)$ generated by the classes $[\calX]$ of
(quasi-projective) varieties with finite groups actions modulo the relations:
 \begin{enumerate}
 \item[(1)] and (2) from Definition~\ref{def:pre-Groth-fGr}.
 \item[(3)] if $(Z,G)$ is a $G$-variety and $G$ is a subgroup of a finite group $H$, then
 $$
 \left[(\ind_G^H Z,H)\right]=\left[(Z,G)\right]\,.
 $$
\end{enumerate}
The multiplication in ${K}_0^{\rm fGr}(\Var)$ is defined by the Cartesian product of varieties
with the natural finite groups action on it.
\end{definition}

One can give another description of the ring $K_0^{\rm fGr}(\Var)$. For an 
embedding $\varphi: G \to H$ (of finite groups) there exist a  group homomorphism
$\ind_G^H$ from the Grothendieck ring $K_0^{G}(\Var)$ of  $G$-varieties to 
the Grothendieck ring $K_0^{H}(\Var)$ of  $H$-varieties: 
$$
[(Z,G)] \mapsto [(\ind_{\varphi(G)}^H Z, H)]
$$
($\ind_G^H$ is not a ring homomorphism).
As an Abelian group $K_0^{\rm fGr}(\Var)$ is the injective limit 
$$
\varinjlim K_0^{G}(\Var)
$$
over the set of the isomorphism classes of finite groups with respect to the inclusions.

The multiplication in $K_0^{\rm fGr}(\Var)$ does not come from the multiplications in $K_0^{G}(\Var)$.
It is defined by
$$
\left[(Z_1,G_1)\right]\cdot\left[(Z_2,G_2)\right]=\left[(Z_1\times Z_2,G_1\times G_2)\right]
$$
with the natural (diagonal) action of $G_1\times G_2$.

This interpretation usually is more convenient for proofs and explanations than the initial one.
Therefore, as a rule, we will use it for such purposes.

\begin{remarks}
 {\bf 1.} 
 There exist two natural ring homomorphisms $i: K_0(\Var)\to K_0^{\rm fGr}(\Var)$ sending $[Z]$ to
 $[(Z, \{e\})]$ and $p: K_0^{\rm fGr}(\Var)\to K_0(\Var)$ sending $[(Z,G)]$ to $[Z/G]$. One has
 $p\circ i= {\rm id}$.
 
 \noindent{\bf 2.}
 In \cite{GP}, E.~Getzler and R.~Pandharipande considered the Grothendieck group of varieties with so-called
$\S$-actions, where $\S=\bigsqcup_{n=0}^\infty S_n$ ($S_n$ is the group of permutations of $n$ elements):
$$
K_0(\Var, \S)=\prod_{n=0}^\infty  K_0^{S_n}(\Var)
$$
with the multiplication $\boxtimes$ induced by 
$$
[(X, S_m)]\boxtimes [(Y,S_n )]=[(\ind_{S_m\times S_n}^{S_{m+n}}(X\times Y), S_{m+n})].
$$
One can see that modulo relation~(3) this multiplication coincides with the Cartesian one.
Therefore there exists a natural homomorphism  
$$
K_0(\Var, \S)\to K_0^{\rm fGr}(\Var).
$$
\end{remarks}

Substituting the word {\em group} by the word {\em semigroup} in Definition~\ref{def:Groth-fGr} one gets the
notion of {\em the Grothendieck semiring $S_0^{\rm fGr}(\Var)$ of quasi-projective varieties with finite groups
actions}. There is a natural (semiring) homomorphism $S_0^{\rm fGr}(\Var)\to K_0^{\rm fGr}(\Var)$.
This notion permits to introduce the notion of effectiveness of a power structure defined over
the ring $K_0^{\rm fGr}(\Var)$.

As in Section~\ref{sec:Orbifold_Euler}, let ${\rm Conj\,}G$ be the set of conjugacy classes of elements of
a group $G$. The conjugacy class of an element $g\in G$ is denoted by $[g]$. If there are several groups
containing $g$, we will indicate the group using the notation $[g]_G$. The centralizer of an element $g\in G$
is denoted by $C_G(g)$.

Let $Z$ be a $G$-variety, let $G$ be a subgroup of a finite group $H$, and let $\ind_G^H Z$ be the induced
$H$-variety. If an element $h\in H$ has a non-empty fixed point set
$\left(\ind_G^H Z\right)^{\langle g\rangle}$
(say, $(h_0, x_0)\in \left(\ind_G^H Z\right)^{\langle g\rangle}$), then there exists $g\in G$ such that
$(hh_0g^{-1}, gx_0)=(h_0,x_0)$, i.e., $h_0^{-1}hh_0=g$, $gx_0=x_0$. This means that $g\in [h]_H$.
Since in the definitions of the orbifold Euler characteristic and of the higher order Euler characteristics
the summation runs over representatives of the conjugacy classes of elements of the group, we can assume
that applying them to the $H$-variety $\ind_G^H Z$ we always take a representative of a conjugacy class
of elements of $H$ belonging to the subgroup $G$.

\begin{lemma}\label{lemma:induction}
 Let $G$, $H$, and $Z$ be as above. Then, for $g\in G$, the spaces with finite groups actions
 $\left(\left(\ind_G^H Z\right)^{\langle g\rangle}, C_H(g)\right)$ and
 $\bigsqcup\limits_{\stackrel{[g']\in {\rm Cong\,}G:}{[g']_H=[g]_H}}
 \left(\ind_{C_G(g')}^{C_H(g')} Z^{\langle g'\rangle}, C_H(g')\right)$ are equivalent.
\end{lemma}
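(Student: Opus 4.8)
The plan is to give an explicit model of the fixed-point set $\left(\ind_G^H Z\right)^{\langle g\rangle}$, to cut it into $C_H(g)$-invariant pieces indexed by the $G$-conjugacy classes that actually occur, and to match each piece with the corresponding induced summand on the right-hand side by an isomorphism that twists the acting group by a conjugation. First I would unwind the induced action $h\cdot[(h_0,x_0)]=[(hh_0,x_0)]$: a class $[(h_0,x_0)]$ is fixed by $g$ exactly when $(gh_0,x_0)\sim(h_0,x_0)$, which reduces to the two conditions $g':=h_0^{-1}gh_0\in G$ and $x_0\in Z^{\langle g'\rangle}$. Hence $\left(\ind_G^H Z\right)^{\langle g\rangle}$ consists precisely of the classes $[(h_0,x_0)]$ with $h_0^{-1}gh_0\in G$ and $x_0$ fixed by $h_0^{-1}gh_0$; since $h_0^{-1}gh_0$ is automatically $H$-conjugate to $g$, every such $g'$ satisfies $[g']_H=[g]_H$.

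Next I would attach to each such fixed point the $G$-conjugacy class $[g']_G$ of $g'=h_0^{-1}gh_0$. Replacing the representative $(h_0,x_0)$ by an equivalent $(h_0\delta^{-1},\delta x_0)$ with $\delta\in G$ changes $g'$ to $\delta g'\delta^{-1}$, so the class $[g']_G$ is a well-defined function on the fixed-point set, and it is constant on $C_H(g)$-orbits because acting by $c\in C_H(g)$ sends $h_0$ to $ch_0$ while $(ch_0)^{-1}g(ch_0)=h_0^{-1}gh_0$. Therefore $\left(\ind_G^H Z\right)^{\langle g\rangle}$ splits $C_H(g)$-equivariantly into the locally closed pieces $P_{[g']}=\{\,[(h_0,x_0)]:h_0^{-1}gh_0\in[g']_G\,\}$, where $[g']$ runs over the classes of ${\rm Conj\,}G$ with $[g']_H=[g]_H$. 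This is exactly the partition of the left-hand side that should correspond to the disjoint union on the right.

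The core step is to identify $(P_{[g']},C_H(g))$ with $(\ind_{C_G(g')}^{C_H(g')}Z^{\langle g'\rangle},C_H(g'))$. Since $[g']_H=[g]_H$, I may fix $a\in H$ with $ag'a^{-1}=g$; then $c'\mapsto ac'a^{-1}$ is an isomorphism $\varphi_a:C_H(g')\to C_H(g)$. I would define $\Psi([(c',y)])=[(ac',y)]$ and verify the four points: it lands in $P_{[g']}$ because $(ac')^{-1}g(ac')=c'^{-1}g'c'=g'$; it is well defined and $\varphi_a$-equivariant, as $\varphi_a(c'')\cdot[(ac',y)]=[(ac''c',y)]=\Psi(c''\cdot[(c',y)])$; it is surjective, by writing $g'':=h_0^{-1}gh_0=\gamma g'\gamma^{-1}$ for some $\gamma\in G$ (using $g''\in[g']_G$) and checking that $c'=a^{-1}h_0\gamma\in C_H(g')$, $y=\gamma^{-1}x_0\in Z^{\langle g'\rangle}$ map to $[(h_0,x_0)]$; and it is injective, because an equality $\Psi([(c_1',y_1)])=\Psi([(c_2',y_2)])$ forces the connecting element $\gamma=(c_2')^{-1}c_1'$ to lie in $C_H(g')\cap G=C_G(g')$. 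These isomorphisms, one per class $[g']$, together with the $C_H(g)$-invariant partition of the fixed-point set, furnish precisely the data demanded by Definition~\ref{def:fGr-equivalent} (with the piecewise isomorphisms in the sense of Definition~\ref{def:fGr-isomorphic}).

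The one genuinely non-formal point, which I expect to be the main obstacle, is to see that $\Psi$ is an \emph{isomorphism of varieties} and not merely a bijection, given that the two sides carry actions of the \emph{distinct} groups $C_H(g)$ and $C_H(g')$. For the first issue I would use that $\ind_G^H Z=(H\times Z)/G$ is a quotient by a free $G$-action, so that after choosing coset representatives it is a disjoint union of copies of $Z$; on each component $\Psi$ is induced by the action of a single element of $G$ on $Z$, which is a biregular automorphism carrying $Z^{\langle g'\rangle}$ isomorphically onto the relevant fixed-point subvariety. For the second issue, the group isomorphism $\varphi_a$ is exactly what reconciles the two centralizers: conjugation by $a$ (available only because $g$ and $g'$ are $H$-conjugate) converts the $C_H(g')$-action into the $C_H(g)$-action, yielding an isomorphism in the sense of Definition~\ref{def:fGr-isomorphic} rather than an honest $C_H(g)$-equivariant map.
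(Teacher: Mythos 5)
Your proof is correct and follows essentially the same route as the paper: both decompose $\left(\ind_G^H Z\right)^{\langle g\rangle}$ into the $C_H(g)$-invariant pieces indexed by the $G$-conjugacy classes $[g']$ with $[g']_H=[g]_H$ and identify each piece with $\ind_{C_G(g')}^{C_H(g')} Z^{\langle g'\rangle}$ via a conjugating element (your $a$, the paper's $h(g')$). Your explicit map $\Psi$ together with $\varphi_a$ actually makes precise a point the paper leaves implicit, namely that $C_H(g')$ acts on the piece only after conjugation by $a$, i.e. through an isomorphism in the sense of Definition~\ref{def:fGr-isomorphic} rather than an honest $C_H(g)$-equivariant map.
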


\begin{proof} Let $(h_0,  x_0)in \ind_G^H Z$ be a fixed point of the action of $g:$ this means that 
 $(h_0, x_0)\in \left(\ind_G^H Z\right)^{\langle g\rangle}$. As above, there exist
$g'\in G$ such that $(g h_0 (g')^{-1}, g' x_0)=(h_0, x_0) $, i.e. $h_0^{-1} g h_0 =g',\, g' x_0=x_0$. In particular 
$g'\in [g]_H$. In each conjugacy class      $[g']\in {\rm Cong\,}G:$ such that 
$[g']_H=[g]_H $, let us choose a representative $g'$.

Let $(\ind_G^H Z)_{[g']_G}$ be the set of points of $\left(\ind_G^H Z\right)^{\langle g\rangle}$
represented by pairs $(h,x)$ with $h^{-1}g h \in [g']_G$.
For $h\in H$, let $\{ h\}$ be the class of $h$ in $H/G$, let $Z_{\{h\}}$ be the subvariety of $\ind_G^H Z$
consisting of points of the form $(h,x)$, $x\in Z$. (This subvariety depends only on the class $\{ h\}$ of $h$.) 
Then $(\ind_G^H Z)_{[g']_G}$ is the union of the subvarieties $Z_{\{ h \}}$ with $h^{-1} g h\in [g']_G.$
One has
$\left(\ind_G^H Z\right)^{\langle g\rangle}\subset
\bigsqcup\limits_{[g']\in {\rm Cong\,}G:} (\ind_G^H Z)_{[g']_G}.$
For each chosen $g'$ ($g'\in[g']_G$, $[g']H=[g]_H$), let $h(g')$ be an element of $H$ such that
$(h(g')^{-1} g h(g) =g'$.
(We can choose $g$ itself as a representative of the conjugacy class $[g]$ and $h(g)=e$.)
The intersection of $(\ind_G^H Z)_{[g']_G}$  with $Z_{\{ h(g') \}}$ is, in a natural way, isomorphic to $Z^{\langle g' \rangle}.$
The  centralizer $C_H(g')$ acts on  $ \left(\ind_G^H Z\right)^{\langle g\rangle} \cap(\ind_G^H Z)_{[g']_G},$
the later is the union of the orbits of points from $ \left(\ind_G^H Z\right)^{\langle g\rangle} \cap Z_{\{ h(g') \}}.$
Moreover the subgroup of  $C_H(g')$ preserving $ \left(\ind_G^H Z\right)^{\langle g\rangle} \cap Z_{\{ h(g') \}}$
coincides with $C_G(g')$. Therefore $ \left(\ind_G^H Z\right)^{\langle g\rangle} \cap (\ind_G^H Z)_{[g']_G}$
and $\ind_{C_G(g')}^{C_H(g'} Z^{\langle g' \rangle}$ are isomorphic as
$C_H(g')$-varieties. Since
$$
\left(\ind_G^H Z\right)^{\langle g\rangle}=
\bigsqcup\limits_{{[g']\in {\rm Cong\,}G}\atop{[g']H=[g]_H}} \left(\ind_G^H Z\right)^{\langle g\rangle}\cap
(\ind_G^H Z)_{[g']_G},
$$ 
one has the statement.
\end{proof}

It is easy to see that $\ind_G^H Z/H=Z/G$ and therefore
$\chi(\ind_G^H Z/H)=\chi(Z/G)$. This means that
 \begin{equation}\label{eq:Euler_induction-0}
  \chi^{(0)}(\ind_G^H Z,H)=\chi^{(0)}(Z,G)\,.
 \end{equation}

\begin{theorem}\label{theo:Euler_induction}
 Let $Z$ be a $G$-variety, $G\subset H$. Then for $k\ge 0$ one has
 \begin{equation}\label{eq:Euler_induction}
  \chi^{(k)}(\ind_G^H Z,H)=\chi^{(k)}(Z,G)\,.
 \end{equation}
\end{theorem}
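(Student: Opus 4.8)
The plan is to argue by induction on the order $k$, using the recursive definition~(\ref{chi-k-orb}) of $\chi^{(k)}$ together with Lemma~\ref{lemma:induction}. The base case $k=0$ is exactly Equation~(\ref{eq:Euler_induction-0}), so I assume that~(\ref{eq:Euler_induction}) holds at order $k-1$ for every pair $(Z,G)$ and every inclusion $G\subset H$, and I establish it at order $k$.

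First I would expand the left-hand side by applying the recurrence to the $H$-variety $\ind_G^H Z$:
$$
\chi^{(k)}(\ind_G^H Z, H) = \sum_{[h]\in {\rm Conj}\,H} \chi^{(k-1)}\big((\ind_G^H Z)^{\langle h\rangle}, C_H(h)\big).
$$
By the observation preceding Lemma~\ref{lemma:induction}, the fixed-point set $(\ind_G^H Z)^{\langle h\rangle}$ is empty unless the class $[h]_H$ meets $G$; hence only such classes contribute, and for each I choose a representative $g\in G$. For that $g$, Lemma~\ref{lemma:induction} identifies $\big((\ind_G^H Z)^{\langle g\rangle}, C_H(g)\big)$ with the disjoint union $\bigsqcup_{[g']_H=[g]_H}\big(\ind_{C_G(g')}^{C_H(g')} Z^{\langle g'\rangle}, C_H(g')\big)$ over the $G$-conjugacy classes $[g']$ fusing into $[g]_H$. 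Since $\chi^{(k-1)}$ is additive over disjoint unions (it is a function on the Grothendieck ring), this replaces each outer term by the corresponding finite sum of $\chi^{(k-1)}$-values.

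The key step is then to apply the induction hypothesis to each summand $\chi^{(k-1)}\big(\ind_{C_G(g')}^{C_H(g')} Z^{\langle g'\rangle}, C_H(g')\big)$. Here $C_G(g') = C_H(g')\cap G \subset C_H(g')$, so this is precisely an instance of induction at order $k-1$ for the $C_G(g')$-variety $Z^{\langle g'\rangle}$; the hypothesis rewrites the summand as $\chi^{(k-1)}(Z^{\langle g'\rangle}, C_G(g'))$. At this point $\chi^{(k)}(\ind_G^H Z, H)$ has become a double sum, indexed by the $H$-conjugacy classes meeting $G$ and, within each, by the $G$-conjugacy classes fusing into it.

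The only delicate point --- and the step I would treat most carefully --- is the combinatorial reindexing of this double sum. The assignment $[g']_G \mapsto [g']_H$ is a well-defined map from ${\rm Conj}\,G$ onto the set of $H$-conjugacy classes meeting $G$, and its fibers are exactly the inner index sets above. Hence the double sum collapses to a single sum over all of ${\rm Conj}\,G$,
$$
\chi^{(k)}(\ind_G^H Z, H) = \sum_{[g']\in {\rm Conj}\,G} \chi^{(k-1)}(Z^{\langle g'\rangle}, C_G(g')),
$$
which is precisely the recurrence~(\ref{chi-k-orb}) defining $\chi^{(k)}(Z,G)$. This closes the induction and proves the theorem.
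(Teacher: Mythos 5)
Your proof is correct and follows essentially the same route as the paper's: induction on $k$ with base case~(\ref{eq:Euler_induction-0}), the recurrence~(\ref{chi-k-orb}) applied to $\ind_G^H Z$, Lemma~\ref{lemma:induction} to decompose each fixed-point set, and the induction hypothesis applied to each $\ind_{C_G(g')}^{C_H(g')} Z^{\langle g'\rangle}$ with $C_G(g')\subset C_H(g')$. The only difference is presentational: you spell out explicitly the reindexing of the double sum via the fibers of $[g']_G\mapsto[g']_H$, which the paper absorbs implicitly into its application of the lemma.
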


\begin{proof}
 Equation~(\ref{eq:Euler_induction-0}) gives the statement for $k=0$.
 Assume that Equation~(\ref{eq:Euler_induction}) is proven for the values of $k$ smaller than the one
 under consideration. One has
 $$
 \chi^{(k)}(\ind_G^H Z,H)=
 \sum_{[h]\in {\rm Conj\,} H}\chi^{(k-1)}\left((\ind_G^H Z)^{\langle h\rangle},C_H(h)\right)\,.
 $$
 It was shown that the fixed point set $(\ind_G^H Z)^{\langle h\rangle}$ is not empty only if there
 exists $g\in G$ such that $[h]_H=[g]_H$. Lemma~\ref{lemma:induction} implies
 $$
 \chi^{(k)}(\ind_G^H Z,H)=
 \sum_{[g]\in {\rm Conj\,} G}\chi^{(k-1)}\left((\ind_{C_G(g)}^{C_H(g)} Z)^{\langle g\rangle},C_H(g)\right)\,.
 $$
 The induction gives
 $$
 \chi^{(k)}(\ind_G^H Z,H)=
 \sum_{[g]\in {\rm Conj\,} G}\chi^{(k-1)}\left(Z^{\langle g\rangle},C_G(g)\right)=\chi^{(k)}(Z,G). 
 $$
\end{proof}

Together with Proposition~\ref{prop:tamanoi} this gives the following statement.

\begin{corollary}
 The maps $\chi^{(k)}:K_0^{\rm fGr}(\Var) \to \Z$ are ring homomorphisms.
\end{corollary}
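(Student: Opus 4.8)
The plan is to verify, one at a time, the properties required of a unital ring homomorphism: that $\chi^{(k)}$ is well-defined on $K_0^{\rm fGr}(\Var)$, that it is additive, that it is multiplicative, and that it sends the unit to $1$. The text has already recorded that $\chi^{(k)}$ is a well-defined additive function on the pre-Grothendieck ring $\widetilde{K}_0^{\rm fGr}(\Var)$, i.e.\ that it respects relations (1) and (2) of Definition~\ref{def:pre-Groth-fGr} (equivalence of varieties with pure actions, and the scissor relation for invariant closed subvarieties). So the first thing I would do is confirm that $\chi^{(k)}$ also respects the extra relation (3), namely that $\chi^{(k)}(\ind_G^H Z, H)=\chi^{(k)}(Z,G)$. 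But this is precisely the content of Theorem~\ref{theo:Euler_induction}. Hence $\chi^{(k)}$ factors through the imposition of relation (3) and descends to a well-defined additive function on $K_0^{\rm fGr}(\Var)$, and additivity is inherited directly from its description as an additive function on the underlying Grothendieck group.

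For multiplicativity I would first establish it on generators. As an abelian group, $K_0^{\rm fGr}(\Var)$ is generated by the classes $[(Z,G)]$ of varieties with pure actions, and the product of two such classes is again the class of a pure action:
$$[(Z_1,G_1)]\cdot[(Z_2,G_2)]=[(Z_1\times Z_2,G_1\times G_2)].$$
Applying $\chi^{(k)}$ and invoking Proposition~\ref{prop:tamanoi} gives
$$\chi^{(k)}\big([(Z_1,G_1)]\cdot[(Z_2,G_2)]\big)=\chi^{(k)}(Z_1\times Z_2,G_1\times G_2)=\chi^{(k)}(Z_1,G_1)\cdot\chi^{(k)}(Z_2,G_2),$$
which is multiplicativity on generators.

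To pass from generators to arbitrary elements, I would write two general classes as integer combinations $a=\sum_i n_i[(Z_i,G_i)]$ and $b=\sum_j m_j[(Z_j',G_j')]$, expand the product $ab$ using bilinearity of the ring multiplication, and then apply $\chi^{(k)}$: by additivity of $\chi^{(k)}$ and the identity just established on each pairwise product of generators, the resulting double sum factors as $\big(\sum_i n_i\chi^{(k)}(Z_i,G_i)\big)\big(\sum_j m_j\chi^{(k)}(Z_j',G_j')\big)$, yielding $\chi^{(k)}(ab)=\chi^{(k)}(a)\chi^{(k)}(b)$. Finally I would check the normalization: the unit of $K_0^{\rm fGr}(\Var)$ is $1=[(\spec\C,\{e\})]$, and applying the recursive definition~(\ref{chi-k-orb}) to the one-point variety with trivial group action collapses the sum (the only conjugacy class is that of $e$, whose fixed locus is the whole point) to $\chi^{(k)}(\mathrm{pt},\{e\})=\chi^{(k-1)}(\mathrm{pt},\{e\})$, so descending induction with base case $\chi^{(0)}(\mathrm{pt},\{e\})=\chi(\mathrm{pt})=1$ gives $\chi^{(k)}(1)=1$. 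Since all of the genuinely geometric content is packaged into the two cited results, I expect no serious obstacle; the only point demanding care is the purely formal extension of multiplicativity from the generating set to the whole ring, which hinges on the product of generators again being a generator and on the additivity of $\chi^{(k)}$.
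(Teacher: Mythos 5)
Your proof is correct and takes essentially the same route as the paper, whose entire argument is to combine Theorem~\ref{theo:Euler_induction} (invariance under the induction relation, hence well-definedness on $K_0^{\rm fGr}(\Var)$) with Proposition~\ref{prop:tamanoi} (multiplicativity on products of pure-action classes). The steps you spell out explicitly---additivity, the bilinear extension from the pure-action generators to arbitrary classes, and the normalization $\chi^{(k)}\bigl([(\spec(\,\C\,),(e))]\bigr)=1$---are precisely the routine verifications the paper leaves implicit.
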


\section{$\lambda$-structures on $K_0^{\rm fGr}(\Var)$}\label{sec:lambda_K^fGr}
Let $(Z,G)$ be a complex quasi-projective variety with a pure action (of a finite group $G$).
The Cartesian power $Z^n$ of the variety $Z$ is endowed with the natural actions of the group $G^n$  
(acting component-wise) and of the group $S_n$ (acting by permutations of the components)
and therefore with the action of their  semidirect product $G^n\rtimes S_n =G_n$: the wreath product.  

\begin{definition}\label{Kapranov-groups} The Kapranov zeta function of $(Z,G)$ is 
$$
 \zeta_{(Z,G)}(t):= 1+\sum_{n=1}^\infty [(Z^n, G_n)]\,  t^n \in 1+t K_0^{\rm fGr}(\Var)[[t]].
$$
\end{definition}

The fact that the Kapranov zeta function is well-defined on $K_0^{\rm fGr}(\Var)$
follows form the following statement.

\begin{proposition}\label{prop:kapranov-ind}
Let $(Z,G)$ be a $G$-variety and let $G\subset H$. Then
$$
\zeta_{(Z,G)}(t)=\zeta_{(\ind_{G}^H Z,H)}(t).
$$
\end{proposition}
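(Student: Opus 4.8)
The plan is to reduce the equality of the two zeta functions to a single coefficient-wise isomorphism and then invoke relation~(3) of Definition~\ref{def:Groth-fGr}. Comparing the coefficients of $t^n$, the assertion $\zeta_{(Z,G)}(t)=\zeta_{(\ind_G^H Z,H)}(t)$ amounts to the equalities
$$
[(Z^n,G_n)]=[((\ind_G^H Z)^n,H_n)]\quad\text{in }K_0^{\rm fGr}(\Var)
$$
for every $n\ge 1$. Since $G\subset H$ implies $G_n=G^n\rtimes S_n\subset H^n\rtimes S_n=H_n$, relation~(3) applied to the $G_n$-variety $Z^n$ gives $[(\ind_{G_n}^{H_n}Z^n,H_n)]=[(Z^n,G_n)]$. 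Hence it suffices to establish the geometric fact that induction commutes with the wreath-power operation, namely that
$$
(\ind_G^H Z)^n\cong \ind_{G_n}^{H_n}(Z^n)
$$
as $H_n$-varieties; this is the heart of the argument.

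To prove this isomorphism I would write both sides as explicit quotients. Recall $\ind_G^H Z=(H\times Z)/\!\sim$ with $H$ acting by $a\cdot[(h,x)]=[(ah,x)]$; taking the $n$-th Cartesian power and remembering the permutation action of $S_n$ exhibits $(\ind_G^H Z)^n$ as $(H^n\times Z^n)$ modulo the componentwise $G^n$-action, together with the $S_n$-action permuting the factors. On the other side, $\ind_{G_n}^{H_n}(Z^n)=(H_n\times Z^n)/\!\approx$. I would then define the map
$$
\Psi:\ (\ind_G^H Z)^n\longrightarrow \ind_{G_n}^{H_n}(Z^n),\qquad
[(h_1,\dots,h_n),(x_1,\dots,x_n)]\mapsto[((h_1,\dots,h_n;e),(x_1,\dots,x_n))],
$$
sending a class to one with trivial $S_n$-component. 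The verification breaks into four routine checks: well-definedness with respect to the $G^n$-equivalence (absorbing $(g_1,\dots,g_n;e)\in G_n$), $H^n$-equivariance, $S_n$-equivariance, and bijectivity. For surjectivity one notes that every class $[((h;\tau),(x))]$ equals $[((h;e),\tau(x))]$ after multiplying by $(e;\tau)\in G_n$; for injectivity one observes that an element of $G_n$ relating two representatives with trivial $S_n$-label must itself have trivial $S_n$-label, so it already lies in $G^n$, reducing to the known description of $(\ind_G^H Z)^n$.

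Having $\Psi$ as an $H_n$-equivariant isomorphism of varieties, relation~(3) yields
$$
[((\ind_G^H Z)^n,H_n)]=[(\ind_{G_n}^{H_n}Z^n,H_n)]=[(Z^n,G_n)],
$$
and summing over $n$ gives the proposition. I expect the only real subtlety to be the $S_n$-equivariance step, where the semidirect-product bookkeeping of the wreath products $G_n\subset H_n$ must be carried out with a fixed convention for the action of $G_n$ on $Z^n$ (say $(g;\tau)\cdot(x_i)=(g_i x_{\tau^{-1}(i)})$) and the wreath identity $(e;\sigma)(h;e)=(\sigma(h);\sigma)$. Once the convention is pinned down, the identification $[((h;e),\tau(x))]=[((h;\tau),(x))]$ (valid because $(e;\tau)\in G_n$) makes the permutation actions on the two sides match, and everything else is the formal manipulation of balanced products.
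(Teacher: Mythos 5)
Your proposal is correct and takes essentially the same route as the paper: both compare coefficients of $t^n$, reduce to the $H_n$-equivariant identification $(\ind_G^H Z)^n \cong \ind_{G_n}^{H_n} Z^n$, and then apply relation~(3) of Definition~\ref{def:Groth-fGr} to conclude $[((\ind_G^H Z)^n,H_n)]=[(Z^n,G_n)]$. The only difference is one of detail: the paper declares the isomorphism obvious (passing through $\ind_{G^n}^{H^n}Z^n$), while you construct the map $\Psi$ explicitly and verify equivariance and bijectivity, which is a sound elaboration of the same step.
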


\begin{proof}
 The coefficient of $t^n$ in $\zeta_{(\ind_{G}^H Z,H)}(t)$ is represented by
$(\ind_{G}^H Z)^n$ with the corresponding $H_n$-action. One obviously has  
 $(\ind_{G}^H Z)^n=\ind_{G^n}^{H^n} Z^n$ with the corresponding action of $H_n$
and therefore 
$(\ind_{G}^H Z)^n=\ind_{G_n}^{H_n} Z^n$
with the corresponding action of $H_n$. 
The relation $(3)$ in the Definition of $K_0^{\rm fGr}(\Var)$ gives
$[(\ind_{G_n}^{H_n} Z^n, H_n)]=[(Z^n, G_n)]$
what is the coefficient of $t^n$ in  $\zeta_{(Z,G)}(t).$
\end{proof}

The Kapranov zeta function possesses the following multiplicativity property.

\begin{proposition}\label{prop:kapranov-product}
Let $(Z_1, G)$ and $(Z_2, G)$ be quasi-projective varieties with actions of a finite group $G$.  
Then one has
$$
 \zeta_{(Z_1 \sqcup  Z_2,G)}(t) = \zeta_{(Z_1,G)}(t)\cdot \zeta_{(Z_2,G)}(t).
$$
\end{proposition}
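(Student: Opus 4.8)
The plan is to compare the coefficients of $t^n$ on the two sides. On the left it is the class $[((Z_1\sqcup Z_2)^n, G_n)]$, while on the right, by the definition of the multiplication in $K_0^{\rm fGr}(\Var)$, it equals
$$
\sum_{a+b=n} [(Z_1^a, G_a)]\cdot[(Z_2^b, G_b)]
=\sum_{a+b=n}[(Z_1^a\times Z_2^b, G_a\times G_b)]\,.
$$
So it suffices to produce a $G_n$-invariant partition of $(Z_1\sqcup Z_2)^n$ that realizes this sum term by term, and then to apply relations $(2)$ and $(3)$.

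First I would stratify $(Z_1\sqcup Z_2)^n$ according to which coordinates land in $Z_1$ and which in $Z_2$. For a subset $S\subseteq\{1,\dots,n\}$ of size $a$, let $U_S$ be the locus where the coordinates indexed by $S$ lie in $Z_1$ and the remaining $n-a$ lie in $Z_2$; it is isomorphic to $Z_1^a\times Z_2^{n-a}$ and is preserved by the component-wise $G^n$-action. The symmetric group $S_n$ sends $U_S$ to $U_{\sigma(S)}$, so no single $U_S$ is $G_n$-invariant, but the union $W_a:=\bigsqcup_{|S|=a}U_S$ over all subsets of fixed size $a$ is. This gives a $G_n$-invariant decomposition $(Z_1\sqcup Z_2)^n=\bigsqcup_{a=0}^n W_a$, whence relation~$(2)$ yields
$$
[((Z_1\sqcup Z_2)^n, G_n)]=\sum_{a=0}^n[(W_a, G_n)]\,.
$$

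The heart of the argument is to identify each $(W_a, G_n)$ with an induced variety. I would take the "standard" piece $U_{\{1,\dots,a\}}\cong Z_1^a\times Z_2^{n-a}$ and compute its stabilizer in $G_n=G^n\rtimes S_n$: the permutation part must preserve the partition $\{1,\dots,a\}\sqcup\{a+1,\dots,n\}$ and so lies in the Young subgroup $S_a\times S_{n-a}$, while $G^n$ splits as $G^a\times G^{n-a}$; together these give exactly the wreath-product subgroup $G_a\times G_{n-a}\subset G_n$, of index $\binom{n}{a}$, which matches the number of size-$a$ subsets $S$. Since $G_n$ permutes the $\binom{n}{a}$ loci $U_S$ in the same way it permutes the cosets of $G_a\times G_{n-a}$, one obtains a $G_n$-isomorphism $W_a\cong\ind_{G_a\times G_{n-a}}^{G_n}(Z_1^a\times Z_2^{n-a})$. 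Relation~$(3)$ then gives $[(W_a, G_n)]=[(Z_1^a\times Z_2^{n-a}, G_a\times G_{n-a})]$, which by the definition of the product equals $[(Z_1^a, G_a)]\cdot[(Z_2^{n-a}, G_{n-a})]$. Summing over $a$ reproduces the coefficient of $t^n$ on the right-hand side.

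The main obstacle I anticipate is the group-theoretic bookkeeping in the isomorphism $W_a\cong\ind_{G_a\times G_{n-a}}^{G_n}(Z_1^a\times Z_2^{n-a})$: one must check carefully that the stabilizer of the standard locus is precisely the subgroup $G_a\times G_{n-a}$ and that the induction construction reconstructs exactly the disjoint union over all size-$a$ subsets with the correct $G_n$-action. Once this identification is in place, the rest is a formal application of relations~$(2)$ and~$(3)$ together with the definition of the multiplication, in the same spirit as the proof of Proposition~\ref{prop:kapranov-ind}.
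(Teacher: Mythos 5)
Your proposal is correct and follows essentially the same route as the paper: the paper's proof also decomposes $\left((Z_1\sqcup Z_2)^n, G_n\right)$ as $\bigsqcup_{k=0}^n \ind_{G_k\times G_{n-k}}^{G_n}(Z_1^k\times Z_2^{n-k})$ and then applies relation~(3) and the definition of the product. The only difference is that you spell out what the paper dismisses with ``one can see'' --- the stratification by the subsets $S$, the $G_n$-invariance of the strata $W_a$, and the stabilizer computation identifying $W_a$ with the induced variety --- which is a welcome elaboration rather than a different argument.
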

\begin{proof}
The coefficient of $t^n$  in  $\zeta_{(Z_1 \cup  Z_2,G)}(t)$
is represented by variety  $(Z_1 \cup  Z_2)^n$ with the corresponding $G_n$ action. One can see that
$$
\left((Z_1 \cup  Z_2)^n, G_n \right)= \bigsqcup_{k=0}^n \left( \ind_{G_k\times G_{n-k} }^{G_n} (Z_1^k\times Z_2^{n-k}), G_n \right).
$$

The relation (3) in Definition~\ref{def:Groth-fGr} means that 
\begin{eqnarray}
[\left((Z_1 \cup  Z_2)^n, G_n \right)]&=&\sum_{k=0}^n [\left(Z_1^k\times Z_2^{n-k}, G_k\times G_{n-k}\right)]
\nonumber\\
&=&\sum_{k=0}^n  [(Z_1^k, G_k)][(Z_2^{n-k}, G_{n-k})]\,.\label{eq:kapranov-prod}
\end{eqnarray}
The right hand side of Equation~(\ref{eq:kapranov-prod}) is just the coefficient of $t^n$ in
$\zeta_{(Z_1,G)}(t)\cdot \zeta_{(Z_2,G)}(t)$.
\end{proof}

Propositions~\ref{prop:kapranov-ind} and~\ref{prop:kapranov-product}
imply the following statement.

\begin{corollary}
The Kapranov zeta function $\zeta_{\bullet}(t)$ defines a $\lambda$-structure on the ring
$K_0^{\rm fGr}(\Var)$.
\end{corollary}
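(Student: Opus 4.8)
The plan is to construct $\lambda_\bullet(t)$ directly from the assignment $(Z,G)\mapsto\zeta_{(Z,G)}(t)$ and then verify the three things required of a $\lambda$-structure as defined in Section~\ref{sec:Power_lambda}: that it is a well-defined map out of $K_0^{\rm fGr}(\Var)$, that it is additive-to-multiplicative, and that it satisfies the normalization $\lambda_a(t)=1+at+\ldots$

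First I would build the map at the level of free groups. Since $K_0^{\rm fGr}(\Var)$ is generated as an Abelian group by the classes of pure actions, and since $1+tK_0^{\rm fGr}(\Var)[[t]]$ is itself an Abelian group under multiplication of power series, I would define a homomorphism $\Lambda$ from the free Abelian group on isomorphism classes of pure actions to $1+tK_0^{\rm fGr}(\Var)[[t]]$ by $[(Z,G)]\mapsto\zeta_{(Z,G)}(t)$ and extending so that sums go to products. This assignment is well defined on isomorphism classes because an isomorphism $(Z,G)\cong(Z',G')$ in the sense of Definition~\ref{def:fGr-isomorphic} induces, for every $n$, an isomorphism $(Z^n,G_n)\cong((Z')^n,G'_n)$, so the corresponding zeta functions are literally equal coefficient by coefficient. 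By its very construction $\Lambda$ already enjoys the additive-to-multiplicative property.

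The key step is to check that $\Lambda$ descends to the quotient $K_0^{\rm fGr}(\Var)$, that is, that it sends each defining relation to $1$. The induction relation~(3) is handled precisely by Proposition~\ref{prop:kapranov-ind}, which gives $\zeta_{(\ind_G^H Z,H)}(t)=\zeta_{(Z,G)}(t)$. The scissor relation~(2), applied to a pure action $(Z,G)$ with a $G$-invariant closed subvariety $Y$, asserts $[(Z,G)]=[(Y,G)]+[(Z\setminus Y,G)]$, and under $\Lambda$ this reads $\zeta_{(Z,G)}(t)=\zeta_{(Y,G)}(t)\cdot\zeta_{(Z\setminus Y,G)}(t)$, which is exactly Proposition~\ref{prop:kapranov-product}. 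The equivalence relation~(1) needs no separate argument: passing to a common refinement into pure pieces is controlled by~(2) applied piece by piece, together with the invariance of $\zeta$ under isomorphisms of pure actions already recorded. Hence $\Lambda$ factors through a homomorphism $\lambda_\bullet(t)\colon K_0^{\rm fGr}(\Var)\to 1+tK_0^{\rm fGr}(\Var)[[t]]$ that is additive-to-multiplicative by construction.

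Finally I would verify the normalization. Since $G_1=G$, the coefficient of $t$ in $\zeta_{(Z,G)}(t)$ is $[(Z,G)]$, so $\zeta_{(Z,G)}(t)=1+[(Z,G)]\,t+\ldots$; this first-order form is preserved under products and inverses in the group $1+tK_0^{\rm fGr}(\Var)[[t]]$, whence for an arbitrary element $a=\sum_i n_i[(Z_i,G_i)]$ one obtains $\lambda_a(t)=1+at+\ldots$ Combined with the additive-to-multiplicative property, this is exactly the definition of a $\lambda$-structure. The only point demanding genuine care is the well-definedness across all the relations defining the ring, and the main obstacle there — compatibility with induction and with the scissor relation — is precisely what the two preceding propositions were arranged to dispose of, the remaining equivalence relation reducing to them.
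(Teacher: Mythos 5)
Your proposal is correct and follows essentially the same route as the paper, whose entire proof is the observation that Propositions~\ref{prop:kapranov-ind} and~\ref{prop:kapranov-product} dispose of the induction relation and the scissor/equivalence relations respectively; you have merely made explicit the routine factorization through the free Abelian group on pure classes and the first-order normalization $\zeta_{(Z,G)}(t)=1+[(Z,G)]\,t+\ldots$, which the paper leaves tacit.
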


\begin{remark}
 In the terms of Definition~\ref{def:fin-groups} the Kapranov zeta function of a variety
 $\calX=\bigsqcup_{i=1}^s (X_i, G_i)$ with a finite groups action is  
 $$
 \zeta_{\calX}(t)= \prod_{i=1}^s  \zeta_{(X_i,G_i)}(t) \in 1+t K_0^{\rm fGr}(\Var)[[t]].
 $$
\end{remark}

For a $G$-variety $Z$, let $\Delta_G\subset Z^n$ (the {\em big $G$-diagonal}) be the set of $n$-tuples
$(x_1, \ldots, x_n)\in Z^n$ with at least of two of $x_i$ from the same $G$-orbit. One has a natural
action of the wreath product $G_n$ on $Z^n\setminus\Delta_G$ (inherited from the action on $Z^n$).

\begin{definition}\label{lambba} 
Let the series $\lambda_{(Z,G)}(t)\in 1+t K_0^{\rm fGr}(\Var)[[t]]$
 be defined by
$$
\lambda_{(Z,G)}(t):=1+\sum_{n=1}^\infty [(Z^n\setminus \Delta_G, G_n)]\,  t^n \in 1+t K_0^{\rm fGr}(\Var)[[t]].
$$
\end{definition}
Just as above the facts that the series $\lambda_{\bullet}(t)$ is well-defined and defines a
$\lambda$-structure on the ring $K_0^{\rm fGr}(\Var)$ follows from the following statements.

\begin{proposition}\label{prop:lambda-ind}
 Let $(Z_1, G)$ and $(Z_2, G)$ be quasi-projective varieties with actions of a finite group $G$.  
Then one has
$$
 \lambda_{(Z_1 \cup  Z_2,G)}(t) = \lambda_{(Z_1,G)}(t)\cdot \lambda_{(Z_2,G)}(t).
$$
Let $(Z,G)$ be a $G$-variety and let $G\subset H$. Then
$$
\lambda_{(Z,G)}(t)=\lambda_{(\ind_{G}^H Z,H)}(t).
$$
\end{proposition}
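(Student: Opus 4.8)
The plan is to prove the two assertions separately, in each case following the pattern established in the proofs of Propositions~\ref{prop:kapranov-product} and~\ref{prop:kapranov-ind} for the Kapranov zeta function; the only new ingredient is the bookkeeping of the big diagonal, which behaves well because it is cut out by orbit-coincidence conditions that are compatible both with disjoint union and with induction.

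For the multiplicativity I would stratify $(Z_1 \sqcup Z_2)^n$ according to which coordinates land in $Z_1$ and which in $Z_2$. Since $Z_1$ and $Z_2$ are disjoint and each is $G$-invariant, two points lying in different blocks can never be in the same $G$-orbit; hence a tuple avoids $\Delta_G$ precisely when its $Z_1$-coordinates are pairwise $G$-inequivalent and its $Z_2$-coordinates are pairwise $G$-inequivalent. Grouping the strata having exactly $k$ coordinates in $Z_1$ into a single $G_n$-orbit of strata, whose stabilizer is $G_k\times G_{n-k}$, I expect the geometric identity
$$
\left((Z_1 \sqcup Z_2)^n \setminus \Delta_G,\, G_n\right) = \bigsqcup_{k=0}^n \left(\ind_{G_k \times G_{n-k}}^{G_n}\bigl((Z_1^k \setminus \Delta_G)\times(Z_2^{n-k}\setminus\Delta_G)\bigr),\, G_n\right).
$$
Applying relation~(3) of Definition~\ref{def:Groth-fGr} to each summand turns the right-hand side into $\sum_{k=0}^n [(Z_1^k\setminus\Delta_G, G_k)]\cdot[(Z_2^{n-k}\setminus\Delta_G, G_{n-k})]$, which is exactly the coefficient of $t^n$ in $\lambda_{(Z_1,G)}(t)\cdot\lambda_{(Z_2,G)}(t)$, paralleling Proposition~\ref{prop:kapranov-product}.

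For the induction invariance I would start from the identification $(\ind_G^H Z)^n = \ind_{G_n}^{H_n} Z^n$ already used in the proof of Proposition~\ref{prop:kapranov-ind}, and show that under it the big $H$-diagonal $\Delta_H$ of $\ind_G^H Z$ corresponds to $\ind_{G_n}^{H_n}\Delta_G$. The key observation is that the $H$-orbits of $\ind_G^H Z$ are in bijection with the $G$-orbits of $Z$: two points $[(h_1,x_1)]$ and $[(h_2,x_2)]$ lie in one $H$-orbit if and only if $x_1$ and $x_2$ lie in one $G$-orbit. Consequently, under the embedding $Z^n \hookrightarrow \ind_{G_n}^{H_n} Z^n$, $x\mapsto[(1,x)]$, one has $\Delta_H \cap Z^n = \Delta_G$. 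Since $\Delta_H$ is $H_n$-invariant and every point of $\ind_{G_n}^{H_n}Z^n$ is an $H_n$-translate of an embedded point of $Z^n$, this forces $\Delta_H = \ind_{G_n}^{H_n}\Delta_G$, whence $(\ind_G^H Z)^n \setminus \Delta_H = \ind_{G_n}^{H_n}(Z^n\setminus\Delta_G)$. Relation~(3) then yields $[((\ind_G^H Z)^n\setminus\Delta_H, H_n)] = [(Z^n\setminus\Delta_G,G_n)]$, which is the coefficient of $t^n$ on both sides.

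The routine content is the combinatorics of the $G_n$-orbit structure on the strata (as in Proposition~\ref{prop:kapranov-product}) together with the standard fact that induction commutes with removing an invariant closed subvariety. The genuinely new step, on which the whole induction-invariance argument rests, is the diagonal-matching identity $\Delta_H = \ind_{G_n}^{H_n}\Delta_G$; I expect this to be the main obstacle, and I would isolate it via the orbit correspondence between $\ind_G^H Z$ and $Z$ and the $H_n$-invariance of $\Delta_H$, treating everything else as bookkeeping parallel to the $\zeta_\bullet$ case.
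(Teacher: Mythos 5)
Your proof is correct and follows essentially the same route as the paper: the identical stratification $\bigl((Z_1\sqcup Z_2)^n\setminus\Delta_G,\,G_n\bigr)=\bigsqcup_{k=0}^n\bigl(\ind_{G_k\times G_{n-k}}^{G_n}(Z_1^k\setminus\Delta_G)\times(Z_2^{n-k}\setminus\Delta_G),\,G_n\bigr)$ followed by relation~(3) for the first part, and the reduction to the argument of Proposition~\ref{prop:kapranov-ind} via $(\ind_G^H Z)^n=\ind_{G_n}^{H_n}Z^n$ for the second. Your only addition is to spell out the diagonal-matching identity $\Delta_H=\ind_{G_n}^{H_n}\Delta_G$ via the orbit correspondence, a step the paper compresses into ``the arguments are literally the same as for the Kapranov zeta function,'' and your verification of it is sound.
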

 
\begin{proof}  The coefficient of $t^n$  in  $\lambda_{(Z_1 \cup  Z_2,G)}(t)$
is represented by variety  $(Z_1 \cup  Z_2)^n\setminus \Delta_G$ with the corresponding $G_n$ action. One has
$$
\left((Z_1 \cup  Z_2)^n\setminus \Delta_G, G_n \right)= \bigsqcup_{k=0}^n
\left( \ind_{G_k\times G_{n-k} }^{G_n} (Z_1^k\setminus \Delta_G) \times (Z_2^{n-k}\setminus \Delta_G ), G_n \right).
$$
 The relation $(3)$ in the Definition~\ref{def:Groth-fGr} of $K_0^{\rm fGr}(\Var)$ gives
\begin{eqnarray*}
 [\left((Z_1 \cup  Z_2)^n\setminus \Delta_G, G_n \right)]&=& \sum_{k=0}^n
[(Z_1^k\setminus \Delta_G) \times (Z_2^{n-k}\setminus \Delta_G ), G_k\times G_{n-k})]\\
\ =&& \sum_{k=0}^n [(Z_1^k \setminus \Delta_G, G_k)] [(Z_2^{n-k} \setminus \Delta_G, G_{n-k})].
\end{eqnarray*}
The right hand side is the coefficient of $t^n$ in $\lambda_{(Z_1,G)}(t)\cdot \lambda_{(Z_2,G)}(t)$.

The arguments for the second part of the Proposition are literally the same as for the Kapranov zeta function
in Proposition~\ref{prop:kapranov-ind}.
\end{proof}

\section{Power structures over $K_0^{\rm fGr}(\Var)$ }\label{sec:power_K^fGr}
The $\lambda$-structures on $K_0^{\rm fGr}(\Var)$ introduced above define  power structures
over the ring.  In all cases up to now (say in $K_0(\Var)$) the power structures defined by the analogues 
of the series $\zeta_{\bullet}(t)$ and $\lambda_{\bullet}(t)$ are the same.
This is not the case here.

In terms of the corresponding power structures (which we denote by $(A(t))^m_\zeta$
and $(A(t))^m_\lambda$ respectively)
one has
$$
 \zeta_{(Z,G)}(t)= ( \zeta_{ (\spec(\, \C\, ), (e))} (t) )_\zeta^{[(Z,G)]}=
 (1+\sum_{n=1}^\infty [(\spec(\, \C\, ), S_n)]\,  t^n)_\zeta^{[(Z,G)]}\,,
$$
$$
 \lambda_{(Z,G)}(t)= ( \lambda_{ (\spec(\, \C\, ), (e))} (t) )_\lambda^{[(Z,G)]}=
 (1+[ (\spec(\, \C\, ), S_1)]\,  t )_\lambda^{[(Z,G)]}\,.
$$

Let us compute the first terms of the series $(1+[(\spec(\, \C\, ), S_1)]\,  t)_\zeta^{[(Z,G)]})$
(to see that it is different form $(1+[(\spec(\, \C\, ), S_1)]\,t)_\lambda^{[(Z,G)]})$. We have 
\begin{eqnarray*}
(1+[(\spec(\,\C\,), S_1)]\,t)&=&(1+[(\spec(\,\C\,), S_1)]\,t+[(\spec(\,\C\,), S_2)]\,t^2+\ldots)\times\\
\times(1-[(\spec(\,\C\,), S_2)]\,t^2+\ldots)&=&\zeta_{(\spec(\,\C\,), S_1)}(t)\cdot
\zeta_{-[(\spec(\,\C\,), S_2)]} (t^2)\cdot\ldots 
\end{eqnarray*}
where the dots mean terms do not influencing the part of degree $\leq 2$.
\begin{eqnarray*}
(1&+&[(\spec(\, \C\, ), S_1)]\,  t)_\zeta^{[(Z,G)]})=
(\zeta_{[(Z,G)]}(t)) \cdot (\zeta_{[(Z,G \times S_2)]}(t^2) )^{-1}\cdot\ldots\\
&=&(1+[(Z,G)]t+[(Z^2,G_2)]t^2+\ldots)(1- [(Z, G\times S_2)]t^2+\ldots) \cdot\ldots\\
&=&1+[(Z,G)]t+([(Z^2,G_2)]- [(Z, G\times S_2)])t^2+\ldots
\end{eqnarray*}
where $S_2$ acts on $Z$ trivially.
Thus one has 
\begin{eqnarray*}
(1&+&[(\spec(\,\C\,), S_1)]\,t)_\zeta^{[(Z,G)]}- (1+[(\spec(\,\C\,), S_1)]\,  t)_\lambda^{[(Z,G)]}\\
&=&([(\Delta_G, G_2)]-[(Z, G\times S_2)])t^2 \mod t^3
\end{eqnarray*}
where $\Delta_G$ is the big $G$-diagonal in $Z^2$. The coefficient of $t^2$ is not equal to zero in 
$K_0^{\rm fGr}(\Var)$ even for the trivial action of the group $G$ on $Z$.
This follows from the fact the isotropy groups of points of the two terms have different orders.

Moreover the coefficient of $t^2$ in $(1+[(\spec(\,\C\,), S_1)]\,t)_\zeta^{[(Z,G)]}$ does 
not belong to the image of the Grothendieck semi-ring $S_0^{\rm fGr}(\Var)$ (due to the same arguments).
This means that the power structure defined by the Kapranov zeta function $ \zeta_{\bullet}(t)$
is not effective.  

\begin{theorem}\label{lambba-efective}
 The power structure over $K_0^{\rm fGr}(\Var)$ defined by the series $\lambda_{\bullet}(t)$ is effective. 
\end{theorem}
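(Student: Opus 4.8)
The plan is to prove effectiveness by exhibiting an explicit geometric formula for the power operation $(A(t))^m_\lambda$, modelled on Equation~(\ref{geometric}), whose coefficients are visibly classes of honest varieties with (pure) finite group actions; once such a formula is known to compute the power structure, effectiveness can simply be read off. Concretely, for $m=[(M,G)]$ and $A(t)=1+\sum_{i\ge1}[(A_i,K_i)]\,t^i$ with pure effective coefficients, I would introduce
\begin{equation}\label{eq:geom-fGr}
(A(t))^{[(M,G)]}_\lambda:=1+\sum_{k=1}^{\infty}\Bigg(\sum_{\{k_i\}:\sum_i ik_i=k}\Big[\Big(\big(M^{\sum_i k_i}\setminus\Delta_G\big)\times\prod_i A_i^{k_i},\ \prod_i (G\times K_i)_{k_i}\Big)\Big]\Bigg)t^k,
\end{equation}
where $N=\sum_i k_i$, the set $\Delta_G\subset M^N$ is the big $G$-diagonal (tuples with two entries in one $G$-orbit), and $(G\times K_i)_{k_i}=(G\times K_i)^{k_i}\rtimes S_{k_i}$ is the wreath product acting blockwise: on each type-$i$ point the factor $G$ acts on its $M$-coordinate and $K_i$ on its $A_i$-decoration, while $S_{k_i}$ permutes the $k_i$ type-$i$ points together with their decorations. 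Non-pure effective inputs are accommodated by letting each $A_i$ be a disjoint union of pure pieces, the acting group splitting accordingly.

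Effectiveness is immediate from~(\ref{eq:geom-fGr}): every coefficient is a finite sum of classes $[(Y,\Gamma)]$ of quasi-projective varieties $Y$ (open subsets of products, times products of the $A_i$) equipped with genuine actions of the finite groups $\Gamma=\prod_i(G\times K_i)_{k_i}$, so it lies in the image of $S_0^{\rm fGr}(\Var)$ whenever the inputs do. The novelty relative to~(\ref{geometric}) is that the permutation factors $S_{k_i}$ are no longer divided out but retained inside the acting group --- precisely the passage from "forgetting the action and quotienting" to "remembering the action." As a consistency check one verifies compatibility with $p\colon K_0^{\rm fGr}(\Var)\to K_0(\Var)$, $[(Y,\Gamma)]\mapsto[Y/\Gamma]$: quotienting~(\ref{eq:geom-fGr}) first by $\prod_i(G^{k_i}\times K_i^{k_i})$ and then by $\prod_iS_{k_i}$ returns exactly the coefficient of~(\ref{geometric}) for $[M/G]$ and the $[A_i/K_i]$, as it must.

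It then remains to show that~(\ref{eq:geom-fGr}) really is the power structure attached to the $\lambda$-structure $\lambda_\bullet$. First I would verify the base identity $(1+t)^{[(Z,G)]}_\lambda=\lambda_{(Z,G)}(t)$: setting $A_1=\spec(\C)$, $K_1=\{e\}$ and all higher $A_i$ trivial leaves only $k_1=n$, and~(\ref{eq:geom-fGr}) returns $[(Z^n\setminus\Delta_G,G_n)]$, the coefficient in $\lambda_{(Z,G)}(t)$ of Definition~\ref{lambba}; since $\lambda_{(\spec(\C),\{e\})}(t)=1+t$, the normalization is correct. Next I would check that~(\ref{eq:geom-fGr}) satisfies the power-structure axioms, in particular multiplicativity in the base $(A(t)B(t))^m=(A(t))^m(B(t))^m$, the substitution axiom~(7), and axiom~(5). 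Granting these, any series decomposes uniquely as $A(t)=\prod_{i\ge1}\lambda_{b_i}(t^i)=\prod_{i\ge1}(1+t^i)^{b_i}_\lambda$ (the decomposition recalled in Section~\ref{sec:Power_lambda}), whence $(A(t))^m_\lambda=\prod_i\big((1+t^i)^{b_i}\big)^m=\prod_i(1+t^i)^{mb_i}_\lambda=\prod_i\lambda_{mb_i}(t^i)$ by axioms~(3),(5),(7) and the base identity. This is exactly the recipe defining the $\lambda_\bullet$-power structure, so the two coincide and the effective formula~(\ref{eq:geom-fGr}) computes it.

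The main obstacle is the verification of multiplicativity in the base for~(\ref{eq:geom-fGr}). This is the finite-group refinement of the argument behind Proposition~\ref{prop:lambda-ind}: one decomposes a configuration of decorated points drawn from the product input into sub-configurations coming from each factor, and tracks how the acting wreath products combine. The delicate points are that the decorations now carry their own groups $K_i$, that the condition $\setminus\Delta_G$ must be respected under recombination, and that identifying the induced action of the enlarged group with $\prod_i(G\times K_i)_{k_i}$ requires the induction relation~(3) of Definition~\ref{def:Groth-fGr} --- exactly as the identities $(\ind_G^H Z)^n=\ind_{G_n}^{H_n}Z^n$ were the crux of Propositions~\ref{prop:kapranov-ind} and~\ref{prop:lambda-ind}. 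Once the blockwise splitting of points-by-type and the corresponding factorization of $\prod_i(G\times K_i)_{k_i}$ into induced pieces are set up carefully, axioms~(5) and~(7) follow by the same bookkeeping, completing the proof.
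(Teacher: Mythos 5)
Your proposal is correct and takes essentially the same route as the paper: your explicit formula coincides with the paper's Equation~(\ref{eq:power-lamdba}) (your group $\prod_i(G\times K_i)_{k_i}$ is exactly the $G_{\{k_i\}}$ defined there), and the paper likewise proves the theorem by interpreting the coefficients as configuration spaces of decorated point sets, verifying the power-structure axioms (3)--(5) through the same type-decomposition plus the induction relation~(3) of Definition~\ref{def:Groth-fGr}, checking the base identity $\lambda_{(M,G)}(t)=(1+t)^{[(M,G)]}$, and reading off effectiveness directly from the formula. The only difference is completeness, not substance: the paper carries out the ordering/induction bookkeeping for axioms (3), (4) and (5) in full (axiom~(4), which you subsume under ``the axioms,'' gets its own argument there), whereas you leave these verifications as a described plan.
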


\begin{proof}
To prove the statement, we will give a formula for $(A(t))^m$, where
$A(t)=1+[(A_1,G_1)]t+[(A_2,G_2)] t^2+\ldots$, $m=[(M,G)]$, that is, for the case when the coefficients of
the series $A(t)$ and the exponent are classes of varieties with pure actions.  
It is given by the equation
  \begin{eqnarray}\label{eq:power-lamdba}
  \hspace{-20pt}&\ & (A(t))^{m}=\nonumber\\
  \hspace{-20pt}&=&1+\sum_{k=1}^{\infty}\left(
  \sum_{\{k_i\}:\sum_i ik_i=k}
  \left[\left(
  \left(
  M^{\sum_i k_i}\setminus \Delta_G
  \right)
  \times\prod_i A_i^{k_i}, G_{\{k_i\}}
  \right)
  \right]
  \right)
  \cdot t^k\,,
 \end{eqnarray}
where the variety $\left(M^{\sum_i k_i}\setminus \Delta_G\right)\times\prod_i A_i^{k_i}$
is endowed with a finite group action in the following way.
It carries the natural action of the product 
$$
G^{\sum_i k_i}\times\prod_i{G_i^{k_i}}
$$
of the finite groups acting on the components of $M$ and $A_i$.
Besides that there is a natural action of the product  $\prod_i S_{k_i}$ of permutation groups, 
where $S_{k_i}$ acts simultaneously on the components of $M^{k_i}$ and of $A_i^{k_i}$ (that is it acts by
permutation on the components of $(M\times  A_i)^{k_i}$).
The variety $\left(M^{\sum_i k_i}\setminus \Delta_G \right)\times\prod_i A_i^{k_i}$ is endowed the
the action of the group $G_{\{k_i\}}$ generated by these two action: a semidirect product of the
groups indicated above.

It is enough to prove that Equation~(\ref{eq:power-lamdba}) defines a power structure over the ring
$K_0^{\rm fGr}(\Var)$ and that  $\lambda_{(M,G)}(t)=(1+t)^{[(M,G)]}$. After that its effectiveness is obvious.
We have to verify the properties (3), (4) and (5) of Definition~\ref{Power}: all other properties obviously
hold. For that let us give a geometric interpretation of the coefficient of $t^k$ in
Equation~(\ref{eq:power-lamdba}).

Let $\Gamma_A:=\coprod_{i=1}^k A_i$ and let $I_A:\Gamma_A\to\Z$ be the tautological function on $\Gamma_A$
which sends the component $A_i$ to $i$.
The coefficient of $t^k$ in Equation~(\ref{eq:power-lamdba}) is the configuration space of pairs 
$(K,\Psi)$, where $K$ is an ordered finite subset of 
$M$ and $\Psi$ is a map from $K$ to $\Gamma_A$ such that $I_A(\Psi(x))\leq I_A(\Psi(y))$
for $x<y$ (that is several first points of $K$ (let us denote their number by $k_1$) are mapped to  $A_1$,
several subsequent ones (number of then being $k_2$) are mapped to $A_2$, \dots) and 
$$
\sum_{x\in K} I_A(\Psi(x))=\sum_{i} i k_i=k.
$$
The group $G^{\sum_i k_i}\times\prod_i{G_i^{k_i}}$ acts on this configuration space: $G^{\sum_i k_i}$ acts
on the source and $\prod_i{G_i^{k_i}}$ acts on the image.
The group $S_{\sum_{i}  k_i}$ acts by simultaneous permutations on points of $K$ sent to $A_i$
and on there images. This gives an action of the group $G_{\{k_i\}}$.

Property~(3). Let $B(t)=1+[(B_1, G'_1)]t+[(B_2,G'_2)] t^2+\ldots$
Let $C_j=\bigsqcup_{i=0}^j (A_i\times B_{j-i})$ be the variety with a finite groups action representing
the coefficient of  $t^j$ in the product $A(t)B(t)$. (Here $A_0=B_0=1=[(\spec(\,\C\,), (e))]$).

The coefficient of $t^k$ in $(A(t)B(t))^m$ is represented by the configuration space $L_k$ ($L$ for ``left'')
of pairs $(K,\Psi)$, where $K$ is an ordered finite subset of $M$ and $\Psi$ is a map from $K$ to
$\Gamma_C=\bigsqcup_{i,j} A_i\times B_j$ such that $\sum_{x\in K}I_C(\Psi(x))=k$.
Such pair is defined by two pairs  $(K',\Psi')$, $\Psi':K'\to\Gamma_A$, and $(K'',\Psi'')$,
$\Psi'':K''\to\Gamma_B$, where $K=K'\cup K''$.
The coefficient of $t^k$ in $(A(t))^m\cdot(B(t))^m$ is represented by the configuration space $R_k$
($R$ for ``right'') of quadruples $((K',\Psi'), (K'',\Psi''))$, where $K'$ and $K''$ are finite ordered
subsets of $M$, $\Psi':K'\to\Gamma_A$, $\Psi'':K''\to\Gamma_B$ and 
$\sum_{x\in K'}I_A(\Psi'(x))+\sum_{x\in K''}I_B(\Psi''(x))=k$.

Modulo orderings of the sets $K$, $K'$ and $K''$ (i.~e. after factorization by the corresponding permutations)
the varieties $L_k$ and $R_k$ are 
equal. However them themselves differ from each other and even the groups acting on them are different.
In order to prove the property, we shall distinguish   
parts of $L_k$ and $R_k$ which can be identified (without factorization by permutations) and such that
$L_k$ and $R_k$ are (disjoint) unions of varieties obtained from these parts by induction operations.
The relation~(3) in Definition~\ref{def:Groth-fGr} will imply that the classes of $L_k$ and $R_k$ in
$K_0^{\rm fGr}(\Var)$ coincide.

We have $L_k=\bigsqcup_{\bar{k}:\sum  i k_i=k} L_{\bar{k}}$.
Let $k_{i_{\ell}} (\ell=1,2,\ldots)$ be integers such that $\sum_{\ell} k_{i_{\ell}}=k_i.$  
Let $L_{\{k_{i_{\ell}}\}}$ be the subvariety of  $L_{\bar{k}}$
consisting of pairs $(K,\Psi)$ such that among $k_i$ points of $K$ mapped  into $C_i$
there are $k_{i_{\ell}}$  points $x$ the first components  $\pi_1\circ\Psi (x)$ of whose image under $\Psi$
belong to $A_{\ell}$ (and thus the second component belongs to $B_{i-\ell}$).

Let $\widehat{L}_{\{k_{i_{\ell}}\}}$ be the subvariety of $L_{\{k_{i_{\ell}}\}}$  
consisting of pairs  $(K,\Psi)\subset  L_{\{k_{i_{\ell}}\}}$ such that the points of $K$
(of fixed multiplicity $i$) are ordered, say, in the following way:  
if $\ell_1<\ell_2,$ then those points $x$ for whom $\pi_1(\Psi(x))\in A_{\ell_1}$
precede those for whom $\pi_1(\Psi(x))\in A_{\ell_2}$, (the order of the points whose images 
under $\pi_1\circ\Psi$ lie in the same $A_{\ell}$ is arbitrary).

Also we have $R_k=\sqcup_{\bar{k}:\sum_i i k_i=k} R_{\bar{k}}.$
For a collection $\{k_{i_{\ell}}\}$, let  $R_{\{k_{i_{\ell}}\}}$ be the corresponding subvariety of
$R_{\bar{k}}$. Let $\widehat{R}_{\{k_{i_{\ell}}\}}$ be the subvariety of $R_{\{k_{i_{\ell}}\}}$ of
the quadruples $\left((K',\Psi'), (K'' ,\Psi'')\right)$ 
such that the points of $K'$ (of $K''$ respectively) of fixed multiplicity $\ell$ are ordered, say,
in the following way:  
if $i_1<i_2,$ then those for whom $\Psi(x)\in C_{i_1}$ precede those for whom $\Psi(x)\in C_{i_2}$,
the order of the points whose images under $\Psi$ lie in the same $C_i$ is arbitrary.

The varieties $\widehat{L}_{\{k_{i_l}\}}$ and $\widehat{R}_{\{k_{i_l}\}}$ carry natural actions of the group
$$
\prod_{i, \ell} S_{k_{i_{\ell}}}
$$
and they are isomorphic as $\prod_{i, \ell} S_{k_{i_{\ell}}}$-varieties.

Moreover, as a $\prod_{i} S_{k_i}$-variety:
$$
{L}_{\{k_{i_{\ell}}\}}=
\ind_{\prod_{i,\ell} S_{k_{i_{\ell}}}}^{\prod_{i} S_{k_i}} \widehat{L}_{\{k_{i_{\ell}}\}}.
$$
Let $m_{\ell}:=\sum_{i} k_{i_{\ell}}, m_{\ell}':=\sum_{i} k_{i_{i-\ell}}$. 
The group $S_{k_{i_{\ell}}}$ is embedded into
$(\prod_{\ell} S_{m_{\ell}}) \times (\prod_{\ell'} S_{m_{\ell'}})$
permuting the corresponding $k_{i_{\ell}}$ elements among those permuted by $S_{m_{\ell}}$
and the corresponding $k_{i_{\ell}}$ elements among those permuted by $S_{m_{\ell'}}$
simultaneously. Then   
as a $(\prod_{\ell} S_{m_{\ell}}) \times (\prod_{\ell'} S_{m_{\ell'}})$-variety 
$$
{R}_{\{k_{i_{\ell}}\}}=
\ind_{\prod_{i, \ell} S_{k_{i_{\ell}}}}^{(\prod_{\ell} S_{m_{\ell}})\times
(\prod_{\ell'} S_{m_{\ell'}})} \widehat{R}_{\{k_{i_{\ell}}\}}.
$$
The actions of the products of the groups $G$, $G_i$ and $G'_i$ are obviously coordinated.

Modulo the relation (3) in Definition~\ref{def:Groth-fGr} this means that
$[{L}_{\{k_{i_{\ell}}\}}]=[{R}_{\{k_{i_{\ell}}\}}]$ and therefore $[L_k]=[R_k]$.

Property~(4). Let $n=(N,G)$. The coefficient of $t^k$ in $(A(t))^{m+n}$ is represented by the configuration
space $L_k$ of pairs $(K,\Psi)$, where $K$ is a finite subset of $M\sqcup N$ and $\Psi:K\to \Gamma_A$ is a
map such that $\sum_{x\in K}I_A(\Psi(x))=k$. For a collection $\{k_i\}$, $i=1,2,\ldots$, $\sum_i ik_i=k$,
(that is for a partition of $k$), let $k_i'$, $i=1,2,\ldots$, be integers such that $0\le k_i'\le k_i$.
Let $L_{\{k_i\}\{k_i'\}}$ be the subvariety of $L_k$ consisting of pairs $(K,\Psi)$ such that among $k_i$
points of $K$ of multiplicity $i$, the number of points belonging to $M$ is equal to $k_i'$ for $i=1,2,\ldots$
(and thus $(k_i-k_i')$ points of $K$ of multiplicity $i$ belong to $N$).
Let $\widehat{L}_{\{k_i\}\{k_i'\}}$ be the subvariety of $L_{\{k_i\}\{k_i'\}}$ consisting of pairs
$(K,\psi)\in L_{\{k_i\}\{k_i'\}}$ such that the points of $K$ of fixed multiplicity $i$ are ordered in the
following way: first the points of $M$, then the points of $N$.

The coefficient of $t^k$ in $(A(t))^m(A(t))^n$ is represented by the configuration space $R_k$ of
quadruples $\left((K', \Psi'),(K'', \Psi'')\right)$, where $K'$ is an ordered finite subset of $M$,
$K''$ is an ordered finite subset of $N$, $\Psi':K'\to \Gamma_A$, $\Psi'':K''\to \Gamma_A$,
$$
\sum_{x\in K'}I_A(\Psi'(x))+\sum_{x\in K''}I_A(\Psi''(x))=k\,.
$$
Let $R_{\{k_i\}\{k_i'\}}$ be the subvariety of $R_k$ consisting of quadruples
$\left((K', \Psi'),(K'', \Psi'')\right)$
with the number of points of multiplicity $i$ in $K'$ equal to $k_i'$ and the number of points of
multiplicity $i$ in $K''$ equal to $(k_i-k_i')$.

The $\left(\prod_i S_{k_i'}\times \prod_i S_{k_i-k_i'}\right)$-varieties $\widehat{L}_{\{k_i\}\{k_i'\}}$
and $R_{\{k_i\}\{k_i'\}}$ are isomorphic,
$$
L_{\{k_i\}\{k_i'\}}=
\ind_{\prod_i S_{k_i'}\times \prod_i S_{k_i-k_i'}}^{\prod_i S_{k_i}}\widehat{L}_{\{k_i\}\{k_i'\}}
$$
as a $\prod_i S_{k_i}$-variety.
Again the actions of the products of the groups $G$, $G'$ and $G_i$ on these varieties
are obviously coordinated. Therefore
$$
[L_{\{k_i\}\{k_i'\}}]=[R_{\{k_i\}\{k_i'\}}]\,.
$$
This implies that $[L_k]=[R_k]$.

Property~(5). The coefficient of $t^k$ in $(A(t))^{mn}$ is represented by the configuration space
$L_k$ of pairs $(K,\Psi)$, where $K$ is a finite subset of $M\times N$, $\Psi:K\to \Gamma_A$ is a map such
that $\sum_{x\in K}I_A(\Psi(x))=k$. For a fixed function $k(\overline{s})$ ($\overline{s}=(s_1, s_2, \ldots)$)
with non-negative integer values and with $\sum_{\overline{s}}\left(k(\overline{s})\sum_i is_i\right)=k$,
let $L_{k(\bullet)}$ be the subvariety of $L_k$ consisting of the pairs $(K,\psi)$ such that the projection
of $K$ to $M$ consists of $\sum_{\overline{s}}k(\overline{s})$ points and $k(\overline{s})$ of them are such
that the preimage (in $K$) of each of them contains $s_1$ points of multiplicity $1$, $s_2$ points of
multiplicity $2$, \dots 

Let $\widehat{L}_{k(\bullet)}$ be the subvariety of $L_{k(\bullet)}$ consisting of pairs
$(K,\psi)\in L_{k(\bullet)}$ such that the points of $K$ of fixed multiplicity $i$ are ordered
in the following way. One takes an arbitrary order of their projections to $M$ and the order of points
with different projections is the one in $M$ (the order of points with the same projection can be arbitrary).

The coefficient of of $t^k$ in $\left((A(t))^n\right)^m$ is represented by the configuration space
$R_k$ of the following patterns: a finite subset of points of $M$ with different multiplicities (with an
order of them) with a finite subset of $N$ (also ordered) associated to each of them and a map of the
latter to $\Gamma_A$. Such a pattern defines a finite subset of $M\times N$.
For a function $k(\overline{s})$ ($\overline{s}=(s_1, s_2, \ldots)$),
let $R_{k(\bullet)}$ be the subvariety of $R_k$ consisting of the patterns with the composition of the
corresponding finite subset of $M\times N$ of the type described above.

Both on $\widehat{L}_{k(\bullet)}$ and on $R_{k(\bullet)}$ one has the natural action of a semidirect
product $S_{k(\bullet)}$ of the group $\prod_{\overline{s}}S_{k(\overline{s})}$ (acting on the projections
to $M$) and of the group $\prod_{\overline{s}}\left(\prod_i S_{s_i}\right)^{k(\overline{s})}$ (acting on
the preimages of points in $M$). The group $S_{k(\bullet)}$ is embedded into
$\prod_i S_{\sum_{\overline{s}}k(s_i)s_i}$. As $S_{k(\bullet)}$-varieties $\widehat{L}_{k(\bullet)}$
and $R_{k(\bullet)}$ are isomorphic. Moreover,
$$
L_{k(\bullet)}=\ind_{S_{k(\bullet)}}^{\prod_i S_{\sum_{\overline{s}}k(s_i)s_i}}\widehat{L}_{k(\bullet)}\,.
$$
Therefore $[L_{k_{(\bullet)}}]=[R_{k_{(\bullet)}}]$ and thus $[L_k]=[R_k]$.

To show that the power structure~(\ref{eq:power-lamdba}) is defined by the series $\lambda_{(Z,G)}(t)$, we have to prove
that (in terms of the power structure)
$$
\lambda_{(Z,G)}(t)=\left(\lambda_{1}(t)\right)^{[(Z,G)]}=(1+t)^{[(Z,G)]}\,.
$$

The only non-empty summand in the coefficient of $t^k$ in (\ref{eq:power-lamdba}) corresponds to $k_1=k$,
$k_i=0$ for $i>1$ and is represented by the variety $Z^k\setminus\Delta_G$ with the action of
the corresponding wreath product. This proves the statement.
\end{proof}

\begin{remark}
 As any $\lambda$-ring, $K_0^{\rm fGr}(\Var)$ carries the $\lambda$-structures opposite to those
 defined by the series $\zeta_{\bullet}(t)$ and $\lambda_{\bullet}(t)$. The fact that the power structure
 over $K_0(\Var)$ defined by the $\lambda$-structures opposite to those defined by the analogues of the
 series $\zeta_{\bullet}(t)$ and $\lambda_{\bullet}(t)$ is not effective (\cite{Stacks}) implies that
 the power structures over $K_0^{\rm fGr}(\Var)$ defined by the $\lambda$-structures opposite to
 $\zeta_{\bullet}(t)$ and $\lambda_{\bullet}(t)$ are not effective as well.
\end{remark}

\section{Grothendieck ring of varieties with equivariant vector bundles}\label{sec:Groth_vector}
An equivariant vector bundle over a complex quasi-projective $G$-variety $Z$ is a ($\C$-)vector bundle
$p:E\to Z$ with an action of the group $G$ on $E$ commuting with the action on $Z$ and preserving the
vector bundle structure. We shall denote it by $(Z,E,G)$.
The notion of an isomorphism of two varieties with equivariant vector bundles
similar to that in Definition~\ref{def:fGr-isomorphic} is clear.

\begin{definition}
 {\em A quasi-projective variety $\calX$ with an action of finite groups and with an equivariant vector bundle
 $E$ over it} is a variety represented as the disjoint union of subvarieties $X_i$, $i=1,\ldots, s$, with
 actions of finite groups $G_i$ on them and with equivariant vector bundles $E_i$ over them (of different
 ranks in general).
\end{definition}

\begin{definition}
 Two varieties $\calX$ and $\calY$ with finite groups actions and with equivariant vector bundles $E$ and $E'$
 over them are {\em equivalent} if there exist partitions
 $(\calX, E)=\bigsqcup\limits_{i=1}^N (X_{(i)}, E_{(i)}, G_{(i)})$ and
 $(\calY, E')=\bigsqcup\limits_{i=1}^N (Y_{(i)}, E'_{(i)}, G'_{(i)})$ of them such that
 $(X_{(i)}, E_{(i)}, G_{(i)})$ is isomorphic to $(Y_{(i)}, E'_{(i)}, G'_{(i)})$ for $i=1, \ldots, N$.
\end{definition}

\begin{definition}
 {\em The Grothendieck ring of varieties with equivariant vector bundles} is the Abelian group
 $K_0^{\rm fGr}(\Vect)$ generated by the classes $[(\calX,E)]$ of complex (quasi-projective) varieties
 with finite groups actions and with equivariant vector bundles over them modulo the relations:
 \begin{enumerate}
 \item[(1)] if varieties $(\calX, E)$ and $(\calY,E')$ with finite groups actions and with equivariant vector
 bundles $E$ and $E'$ over them are equivalent, then
 $[(\calX, E)]=[(\calY,E')]$;
 \item[(2)] if $\calY$ is a Zariski closed subvariety of $\calX$ invariant with respect to the groups action,
 then $[(\calX, E)]=[(\calY, E_{\vert \calY})]+[(\calX\setminus \calY, E_{\vert \calX\setminus \calY})]$;
 \item[(3)] if $(Z,E,G)$ is a $G$-variety with an equivariant vector bundle and $G$ is a subgroup of a
 finite group $H$, then
 $$
 \left[(\ind_G^H Z,\ind_G^H E,H)\right]=\left[(Z,E,G)\right]\,.
 $$
\end{enumerate}
The multiplication in $K_0^{\rm fGr}(\Vect)$ is defined by the Cartesian product of varieties
with the natural finite groups action and with the sum of the corresponding vector bundles over it.
\end{definition}

In other words
\begin{equation}\label{eq:mult_Vect}
\left[(Z_1,E_1,G_1)\right]\cdot\left[(Z_2,E_2,G_2)\right]=
\left[(Z_1\times Z_2,E_1\times E_2,G_1\times G_2)\right]
\end{equation}
(with the natural action of $G_1\times G_2$).

Just as for the Grothendieck ring $K_0^{\rm fGr}(\Var)$ one can give another description of the ring
$K_0^{\rm fGr}(\Vect)$. For a finite group $G$,
let $K_0^{G}(\Vect)$ be the Grothendieck ring of $G$-varieties with equivariant vector bundles over them
defined in an obvious way.  For an embedding $\varphi: G \to H$ (of finite groups) there exist a group
homomorphism $\ind_G^H$ from the ring $K_0^{G}(\Vect)$ to the ring $K_0^{H}(\Var)$: 
$$
[(Z,E,G)] \mapsto [(\ind_{\varphi(G)}^H Z,\ind_{\varphi(G)}^H E H)]\,.
$$
As an Abelian group $K_0^{\rm fGr}(\Vect)$ is the injective limit 
$$
\varinjlim K_0^{H}(\Vect)
$$
over the set of the isomorphism classes of finite groups with respect to the inclusions.
The multiplication is defined by a formula like~(\ref{eq:mult_Vect}).

\begin{remark}
 There are natural ring homomorphisms $i^{\rm v}:K_0^{\rm fGr}(\Var)\to K_0^{\rm fGr}(\Vect)$
 sending the class of a $G$-variety $Z$ to the same variety with the (trivial) vector bundle of rank $0$
 and $p^{\rm v}:K_0^{\rm fGr}(\Vect)\to K_0^{\rm fGr}(\Var)$: forgetting the vector bundle.
 One obviously has $p^{\rm v}\circ i^{\rm v}={\rm id}$.
\end{remark}

Let $(Z,E, G)$ be a $G$-variety with an equivariant vector bundle over it.
Let $x\in Z$ be a point fixed by an element $g\in G$. The element $g$ acts on the fibre $E_x$ of
the vector bundle as an operator of
finite order. Therefore its action can be represented by a diagonal $(d_x\times d_x)$-matrix ($d_x=\dim E_x$)
with the diagonal entries $\exp(2\pi i q_j)$, $j=1, \ldots, d_x$, where $0\le q_j<1$.

\begin{definition} (cf. \cite{Zaslow}, \cite{Ito-Reid})
 The {\em age} (or the {\em fermion shift}) ${\rm age}_x(g)$ of the element $g$ at the point $x$ is
 $\sum\limits_{j=1}^{d_x}q_j\in\Q$.
\end{definition}

Let $\varphi$ be a (rational) number. As above, for an element $g\in G$, let
$Z^{\langle g\rangle}$ be the fixed point set of $g$. For a rational number $q$, let $Z^{\langle g\rangle}_q$
be the subset of $Z^{\langle g\rangle}$ consisting of the points $x$ with ${\rm age}_x(g)=q$.
(The subset $Z^{\langle g\rangle}_q$ is the union of some of the components of $Z^{\langle g\rangle}$.)

\begin{definition}\label{def:orbifold_Vect}
 {\em The generalized orbifold Euler characteristic of $(Z,E,G)$
 of weight $\varphi_1$} is defined by
 \begin{equation}\label{eq:orbifold_Vect}
  [Z,E,G]_{\varphi_1}:=\sum_{[g]\in {\rm Conj\,}G}\sum_{q\in Q}
  [Z_{q}^{\langle g\rangle}/C_{G}(g)]\cdot\LLL^{\varphi_1q}\in K_0(\Var)[\LLL^s]\,.
 \end{equation}
\end{definition}

 We have to show that the generalized orbifold Euler characteristic is well defined by the class
 of $(Z,E,G)$ in $K_0^{\rm fGr}(\Vect)$.
 Essentially we have to
 show that, for a $G$-variety $Z$ with an equivariant vector bundle $E$ and for a finite group $H$ such that
 $G\subset H$, we have
 $$
 [Z,E, G]_{\varphi_1}=[\ind_G^H Z,\ind_G^H E, H]_{\varphi_1}\,.
 $$
 We will show this a little bit later for higher order generalized Euler characteristics introduced below
 as well. (The generalized orbifold Euler characteristic is one of them.)
 
 Let $\overline{\varphi}=(\varphi_1, \varphi_2, \ldots)$ be a fixed sequence of rational numbers.
 
\begin{definition}
 {\em The generalized Euler characteristic} of $(Z,E,G)$ {of order $k$ of weight $\overline{\varphi}$}
 is defined by
 $$
 [Z,E,G]^k_{\overline{\varphi}}=
 \sum_{[g]\in{\rm Conj\,}G}\sum_{q\in\Q}
 [Z_{q}^{\langle g\rangle}, E_{\vert Z_{q}^{\langle g\rangle}},C_{G}(g)]^{k-1}_{\overline{\varphi}}
 \cdot\LLL^{\varphi_kq}\in  K_0(\Var)[\LLL^s]\,,
 $$
 where $[Z,E,G]^1_{\overline{\varphi}}=[Z,E,G]_{\overline{\varphi}}$ is the generalized orbifold
 Euler characteristic.
\end{definition}
 
Alternatively one can start from $k=0$, defining $[Z,E,G]_{\overline{\varphi}}^0$ as
$[Z/G]\in K_0(\Var)[\LLL^s]$. (It does not depend on the equivalent vector bundle $E$.)
This will be used in the proof of the Theorem~\ref{theo:Gen_Euler_def} below.

\begin{remark}
 One can see, that for $k>1$ (that is for all the generalized Euler characteristics except the
 orbifold one), the definition is shorter (and simpler) than that in \cite{MMJ} for non-singular
 $G$-varieties. One can say that the described setting is to some extend more natural for the definition.
\end{remark}

\begin{theorem}\label{theo:Gen_Euler_def}
 For a fixed $\overline{\varphi}$,
 the generalized orbifold Euler characteristic and the generalized Euler characteristics of higher orders
 are well-defined ring homomorphisms $K_0^{\rm fGr}(\Vect)\to K_0(\Var)[\LLL^s]$.
\end{theorem}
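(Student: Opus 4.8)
The plan is to prove the three required properties --- being well-defined, being a ring homomorphism, and being additive --- in the order that reflects their logical dependence, with the well-definedness under induction being the core technical point.

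First I would handle additivity and the scissor relation (relation (2)), which are essentially immediate. The generalized Euler characteristic $[Z,E,G]^k_{\overline{\varphi}}$ is built recursively as a sum over conjugacy classes $[g]$ of contributions from the fixed-point loci $Z^{\langle g\rangle}_q$ with their restricted bundles. Since taking fixed points, restricting bundles, and forming quotients by centralizers all commute with disjoint unions of $G$-invariant subvarieties, and the age is a locally constant function (so $Z^{\langle g\rangle}_q$ splits along any invariant decomposition), additivity for a fixed group $G$ follows by induction on $k$, the base case $k=0$ being the additivity of $[\,\cdot\,/G]$ in $K_0(\Var)$. This is routine.

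The main obstacle is relation (3): the invariance $[Z,E,G]^k_{\overline{\varphi}}=[\ind_G^H Z,\ind_G^H E,H]^k_{\overline{\varphi}}$. I would prove this by induction on $k$, and the crucial geometric input is Lemma~\ref{lemma:induction}, which must be upgraded to keep track of the vector bundle and the age. The base case $k=0$ is Equation~(\ref{eq:Euler_induction-0}): $\ind_G^H Z/H=Z/G$. For the inductive step, I would expand the definition for $\ind_G^H Z$: the sum runs over $[h]\in{\rm Conj\,}H$, and as observed in the text preceding Lemma~\ref{lemma:induction}, only classes meeting $G$ contribute. Applying the equivariant version of Lemma~\ref{lemma:induction}, the fixed-point locus $\left(\ind_G^H Z\right)^{\langle g\rangle}$ with its $C_H(g)$-action decomposes as $\bigsqcup_{[g']_H=[g]_H}\ind_{C_G(g')}^{C_H(g')}Z^{\langle g'\rangle}$, and under the natural identification the restricted bundle $(\ind_G^H E)|_{(\ind_G^H Z)^{\langle g\rangle}}$ corresponds to $\ind_{C_G(g')}^{C_H(g')}(E|_{Z^{\langle g'\rangle}})$. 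I would need to check that the age is preserved under this identification: a point $(h(g'),x)$ of $\ind_G^H Z$ fixed by $g$ has $g$ acting on the fibre exactly as $g'=h(g')^{-1}gh(g')$ acts on $E_x$, so the eigenvalues, hence ${\rm age}_{(h(g'),x)}(g)={\rm age}_x(g')$, agree. Feeding this into the recursive definition and applying the inductive hypothesis (relation (3) in order $k-1$ applied to each $\ind_{C_G(g')}^{C_H(g')}$) collapses the sum over $H$-classes to a sum over $G$-classes, yielding $[Z,E,G]^k_{\overline{\varphi}}$. This is the genuine content of the theorem, and the age-preservation check is where the argument could go wrong if the identifications are not set up carefully.

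Finally, multiplicativity. Here I would invoke the product formula (\ref{eq:mult_Vect}) and argue that $[\,\cdot\,]^k_{\overline{\varphi}}$ is multiplicative on products of varieties with pure actions, $[Z_1\times Z_2,E_1\times E_2,G_1\times G_2]^k_{\overline{\varphi}}=[Z_1,E_1,G_1]^k_{\overline{\varphi}}\cdot[Z_2,E_2,G_2]^k_{\overline{\varphi}}$. This is the motivic refinement of Proposition~\ref{prop:tamanoi}; the proof again goes by induction on $k$, using that conjugacy classes, centralizers, and fixed-point loci in a direct product factor as $[(g_1,g_2)]=[g_1]\times[g_2]$, $C_{G_1\times G_2}((g_1,g_2))=C_{G_1}(g_1)\times C_{G_2}(g_2)$, and $(Z_1\times Z_2)^{\langle(g_1,g_2)\rangle}=Z_1^{\langle g_1\rangle}\times Z_2^{\langle g_2\rangle}$, together with the additivity of the age on a direct sum of representations, so that the weight $\LLL^{\varphi_k q}$ factors as $\LLL^{\varphi_k q_1}\cdot\LLL^{\varphi_k q_2}$. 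Since $K_0^{\rm fGr}(\Vect)$ is generated as an abelian group by classes of varieties with pure actions, and we have verified both that the invariant respects all three defining relations (equivalence, scissor, induction) and that it is multiplicative on generators and sends the unit to $1$, it descends to a well-defined ring homomorphism $K_0^{\rm fGr}(\Vect)\to K_0(\Var)[\LLL^s]$, as claimed.
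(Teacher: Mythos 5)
Your proposal is correct and follows essentially the same route as the paper: reduce everything to invariance under the induction relation (3), prove it by induction on $k$ with base case $(\ind_G^H Z)/H=Z/G$ and inductive step via Lemma~\ref{lemma:induction}, while treating additivity as routine and multiplicativity as the motivic analogue of Proposition~\ref{prop:tamanoi} (the paper cites Lemma~1 of \cite{GPh} for this). In fact you are somewhat more careful than the paper, which leaves implicit both the bundle-equivariant upgrade of Lemma~\ref{lemma:induction} and the check that ${\rm age}_{(h(g'),x)}(g)={\rm age}_x(g')$, the point you correctly identify as where the identifications must be set up carefully.
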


\begin{proof}
 In fact we have to prove only that the generalized Euler characteristics of higher orders are well
 defined as functions on $K_0^{\rm fGr}(\Vect)$. After that their additivity and multiplicativity are
 obvious. (The latter is essentially Lemma~1 in \cite{GPh}.) Thus we have to show that, for a $G$-variety
 $Z$ with an equivariant vector bundle $E$ over it and for a finite group $H\supset G$, one has
 \begin{equation}\label{eq:gen_well_def}
  [\ind_G^H Z, \ind_G^H E, H]^k_{\overline{\varphi}}=[Z, E, G]^k_{\overline{\varphi}}\,.
 \end{equation}
 For $k=0$ this simply means that $(\ind_G^H Z)/H=Z/G$. Assume that (\ref{eq:gen_well_def}) is proven for
 all values of $k$ smaller than the one under consideration. Using Lemma~\ref{lemma:induction} and the
 induction we get
 \begin{eqnarray*}
 &&[\ind_G^H Z, \ind_G^H E, H]^k_{\overline{\varphi}}=
 \sum_{[g]\in {\rm Conj\,} H}\sum_{q\in\Q}
 [(\ind_G^H Z)^{\langle g\rangle}, \ind_G^H E, C_H(g)]^{k-1}_{\overline{\varphi}}\cdot\LLL^{\varphi_kq}=\\
 && \sum_{[g]\in {\rm Conj\,} G}\sum_{q\in\Q}
 [(\ind_{C_G(g)}^{C_H(g)} Z)^{\langle g\rangle}_q,\ind_{C_G(g)}^{C_H(g)} E,
 C_H(g)]^{k-1}_{\overline{\varphi}}\cdot\LLL^{\varphi_kq}=\\
 && \sum_{[g]\in {\rm Conj\,} G}\sum_{q\in\Q}[Z^{\langle g\rangle}, E, C_G(g)]^{k-1}_{\overline{\varphi}}
 \cdot\LLL^{\varphi_kq}=[Z,E,G]^{k}_{\overline{\varphi}}.
 \end{eqnarray*}
\end{proof}

Let $(Z,E,G)$ be a $G$-variety with an equivariant vector bundle, and let $Z_{(d)}=\{x\in Z: \dim E_x=d\}$.
(There are finitely many $d$ with non-empty $Z_{(d)}$.)
One has $[(Z,E,G)]=\sum_d[(Z_{(d)},E_{\vert Z_{(d)}},G)]\in K_0^{\rm fGr}(\Vect)$.

\begin{theorem}
 One has
 $$
 1+\sum_{n=1}^{\infty}[Z^n,E^n,G_n]^k_{\overline{\varphi}}\cdot t^n=
 \prod_{d}\left(\prod\limits_{r_1, \ldots,r_k\geq 1}
 \left(1-\LLL^{\Phi_k(\underline{r})d/2}\cdot t^{r_1r_2\cdots r_k}\right)^{r_2r_3^2\cdots r_k^{k-1}}
 \right)^{-[(Z_d,E_{\vert Z_d},G)]^k_{\overline{\varphi}}}\,,
 $$
 where
 $$
 \Phi_k(r_1,\ldots,r_k)=\varphi_1(r_1-1)+\varphi_2r_1(r_2-1)+\ldots+
 \varphi_kr_1r_2\cdots r_{k-1}(r_k-1).
 $$
\end{theorem}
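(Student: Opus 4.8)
The plan is to reduce the statement about wreath products $(Z^n,E^n,G_n)$ to a statement about the effective power structure on $K_0^{\rm fGr}(\Var)$ extended suitably to track the vector-bundle age data, using the fact that the Kapranov-type generating series $\zeta_{(Z,G)}(t)=1+\sum_n[(Z^n,G_n)]t^n$ is already known (Definition~\ref{Kapranov-groups} and its corollary) to encode the wreath-product powers. First I would split off the age-zero (trivial fibre) locus: since $[(Z,E,G)]=\sum_d[(Z_{(d)},E_{\vert Z_{(d)}},G)]$ and the generalized Euler characteristic is multiplicative (Theorem~\ref{theo:Gen_Euler_def}), the generating series factorizes as the product over $d$, so it suffices to treat a single fixed fibre dimension $d$. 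The outer product $\prod_d(\cdots)$ in the claim comes for free from this multiplicativity together with the additivity of the exponent $[(Z_d,E_{\vert Z_d},G)]^k_{\overline\varphi}$.

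The core of the argument is the recursive definition of $[Z,E,G]^k_{\overline\varphi}$ in terms of fixed-point loci, ages, and centralizers, combined with the combinatorics of $\operatorname{Conj} G_n$. The standard computation (following the derivation of the non-motivic formula~(\ref{higher_generating}) via Tamanoi, and the motivic refinement in \cite{GPh}) expresses a conjugacy class of $G_n$ in terms of a multiset of \emph{$G$-conjugacy-decorated cycles}: an element of $G_n$ has cycles of various lengths $r_1$, and on each cycle the relevant invariant is the $G$-conjugacy class of the product of the $G$-components around the cycle, whose centralizer in $G_n$ is itself a wreath product. The key step I would carry out is to compute, for the order-$k$ iteration, the contribution of a single cycle of combined length $r_1r_2\cdots r_k$: the age of the induced action on $E^n$ restricted to the diagonal fixed locus scales the base age by the factor $\Phi_k(\underline r)$. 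Concretely, a cyclic permutation of length $r$ acting on $E_x^{\oplus r}$ together with a fibrewise operator $g$ of age $a$ contributes age $a+\tfrac{d}{2}(r-1)\cdot(\text{something})$ coming from the eigenvalues of the cyclic shift; iterating this $k$ times and bookkeeping the nested centralizers produces exactly $\Phi_k(r_1,\ldots,r_k)=\varphi_1(r_1-1)+\varphi_2r_1(r_2-1)+\cdots+\varphi_kr_1\cdots r_{k-1}(r_k-1)$ as the accumulated weighted age, hence the power $\LLL^{\Phi_k(\underline r)d/2}$ attached to the monomial $t^{r_1\cdots r_k}$.

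With the single-cycle contribution in hand, I would assemble the full series by the power-structure/exponential formalism: summing over all ways of partitioning $n$ into cycles is precisely taking the ``$(1-u)^{-e}$''-type power of the single-cycle generating factor, where the exponent $e$ is the order-$k$ generalized Euler characteristic $[(Z_d,E_{\vert Z_d},G)]^k_{\overline\varphi}$ of the ``base'' and the multiplicity $r_2r_3^2\cdots r_k^{k-1}$ counts the number of distinct cyclic arrangements of type $\underline r$ (this exponent is the same one appearing in~(\ref{higher_generating}) and comes from counting the nested-cycle structures that yield a fixed combined length). This step is a formal manipulation using properties~(3),(4),(5) of the power structure and the identity $\zeta_{[M]}(t)=(1-t)^{-[M]}$ adapted to $K_0(\Var)[\LLL^s]$; the multiplicativity of $[\cdot]^k_{\overline\varphi}$ guarantees that applying this $K_0(\Var)[\LLL^s]$-valued invariant to the known geometric wreath-product generating series turns products into products and sends the universal exponent to the desired one.

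The main obstacle I expect is the precise age bookkeeping in the iterated construction, i.e.\ verifying that the recursion on $k$ produces exactly the polynomial $\Phi_k$ with the stated coefficients rather than some reindexed variant. The subtlety is that at each level of the recursion one passes from $(Z,E,G)$ to the fixed locus $(Z^{\langle g\rangle}_q,E_{\vert\cdot},C_G(g))$, and for the wreath product the element $g$ is a decorated cycle whose fixed locus is a diagonal copy of the base but whose induced age on the bundle picks up a contribution from the cyclic-shift eigenvalues; one must check that this contribution, weighted by $\varphi_j$ at level $j$ and multiplied by the already-accumulated cycle length $r_1\cdots r_{j-1}$, telescopes correctly. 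I would handle this by an explicit induction on $k$, using Lemma~\ref{lemma:induction} to reduce centralizers in $H_n$-type groups to centralizers in the relevant wreath subgroups at each stage, and verifying the $k=1$ case directly against Definition~\ref{def:orbifold_Vect} where $\Phi_1(r_1)=\varphi_1(r_1-1)$ matches the single-cycle age of a length-$r_1$ shift on a rank-$d$ bundle.
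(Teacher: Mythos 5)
Your proposal is correct and follows essentially the same route as the paper: the paper's proof likewise first factorizes the generating series over the rank strata $Z_d$ by applying the ring homomorphism $[\cdot]^k_{\overline{\varphi}}$ (Theorem~\ref{theo:Gen_Euler_def}) to the disjoint-union multiplicativity of the wreath-product zeta series, and then, for constant rank $d$, simply cites the arguments of \cite{GPh}. The decorated-cycle and age bookkeeping you sketch is precisely the content of that cited argument (your hedged ``something'' equals $1$: a $g$-twisted $r$-cycle acting on a rank-$d$ fibre has age ${\rm age}(g)+d(r-1)/2$, which accumulates to $\Phi_k(\underline{r})d/2$), so your plan just unpacks the citation rather than taking a different path.
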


\begin{proof}
 Since $[\cdot]^k_{\overline{\varphi}}$ is a homomorphism from $K_0^{\rm fGr}(\Vect)$ to $K_0(\Var)[\LLL^s]$,
 one has
 $$
 1+\sum_{n=1}^{\infty}[(Z^n,E^n,G_n)]^k_{\overline{\varphi}}\cdot t^n=
 \prod_{d}\left(1+\sum_{n=1}^{\infty}[Z_d^n,E_{\vert Z_d}^n,G_n]^k_{\overline{\varphi}}\cdot t^n\right).
 $$
 For the $G$-variety $Z_d$ with an equivariant vector bundle $E_{\vert Z_d}$ of constant rank $d$
 the arguments of \cite{GPh} give
 $$
 1+\sum_{n=1}^{\infty}[(Z_d^n,E_{\vert Z_d}^n,G_n)]^k_{\overline{\varphi}}\cdot t^n=
 \left(\prod\limits_{r_1, \ldots,r_k\geq 1}
 \left(1-\LLL^{\Phi_k(\underline{r})d/2}\cdot t^{r_1r_2\cdots r_k}\right)^{r_2r_3^2\cdots r_k^{k-1}}
 \right)^{-[(Z_d,E_{\vert Z_d},G)]^k_{\overline{\varphi}}}.
 $$
 This proves the statement.
\end{proof}

\section{$\lambda$-structures on $K_0^{\rm fGr}(\Vect)$ and power structures over it}
\label{sec:Lambda_power_Vect}
Similar to the case of the Grothendieck ring $K_0^{\rm fGr}(\Var)$, there are two natural $\lambda$-structures
on the Grothendieck ring $K_0^{\rm fGr}(\Vect)$. They are defined by analogues of the series
$\zeta_{\bullet}(t)$ and $\lambda_{\bullet}(t)$. Using the definition of $K_0^{\rm fGr}(\Vect)$
as a limit over the set of finite groups,
we can define them for $G$-varieties with equivariant vector bundles. Let $(Z,E,G)$ be a $G$-variety with an
equivariant vector bundle $E$ over it. Define the following two series:
\begin{eqnarray*}
 \zeta_{(Z,E,G)}(t)&:=&1+\sum_{n=1}^{\infty}[(Z^n, E^n, G_n)]\cdot t^n,\\
 \lambda_{(Z,E,G)}(t)&:=&1+\sum_{n=1}^{\infty}
 [(Z^n\setminus\Delta_G, E^n_{\vert Z^n\setminus\Delta_G}, G_n)]\cdot t^n.
\end{eqnarray*}

\begin{proposition}
 The series  $\zeta_{(Z,E,G)}(t)$ and $\lambda_{(Z,E,G)}(t)$ depend only on the classes $[(Z,E,G)]$ in
 $K_0^{\rm fGr}(\Vect)$.
\end{proposition}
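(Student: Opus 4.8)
The plan is to establish, for each series, the two properties that together guarantee well-definedness on $K_0^{\rm fGr}(\Vect)$: invariance under the induction relation~(3) and multiplicativity under disjoint unions (which is what the additivity relation~(2) requires of an additive-to-multiplicative series); compatibility with the isomorphism relation~(1) is immediate, since isomorphic triples have isomorphic powers. These are exactly the vector-bundle analogues of Propositions~\ref{prop:kapranov-ind} and~\ref{prop:kapranov-product} (for $\zeta_{\bullet}$) and of Proposition~\ref{prop:lambda-ind} (for $\lambda_{\bullet}$), and I would follow those proofs verbatim, carrying the equivariant vector bundle along at every step.

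For the induction invariance, the key geometric observation is that the induction operation commutes with taking Cartesian powers, both for the variety and for the bundle. As already used in the proof of Proposition~\ref{prop:kapranov-ind}, one has $(\ind_G^H Z)^n = \ind_{G_n}^{H_n} Z^n$ as $H_n$-varieties. Since $\ind_G^H E$ is by construction the bundle $(H\times E)/\!\sim\,$ over $\ind_G^H Z = (H\times Z)/\!\sim\,$, the same identification on total spaces gives $(\ind_G^H E)^n = \ind_{G_n}^{H_n} E^n$ as $H_n$-equivariant bundles over $(\ind_G^H Z)^n$ (the fibres over a representative $(x_1,\ldots,x_n)$ agree, being $E_{x_1}\oplus\cdots\oplus E_{x_n}$ on both sides). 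Relation~(3) in the definition of $K_0^{\rm fGr}(\Vect)$ then yields $[(\ind_{G_n}^{H_n} Z^n, \ind_{G_n}^{H_n} E^n, H_n)] = [(Z^n, E^n, G_n)]$ coefficient by coefficient, so $\zeta_{(\ind_G^H Z,\ind_G^H E,H)}(t) = \zeta_{(Z,E,G)}(t)$. For $\lambda_{\bullet}$ one restricts everything to the complement of the big diagonal $\Delta_G$; since $\Delta_G$ and its complement are $H_n$-invariant and respected by the induction decomposition, the identical argument applies to $Z^n\setminus\Delta_G$ together with the restricted bundle.

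For multiplicativity I would take $(Z_1\sqcup Z_2, E_1\sqcup E_2, G)$ with $E_i = E\vert_{Z_i}$ and use the $G_n$-equivariant decomposition
$$
\big((Z_1\sqcup Z_2)^n,\,(E_1\sqcup E_2)^n,\,G_n\big)=
\bigsqcup_{k=0}^n \ind_{G_k\times G_{n-k}}^{G_n}\big(Z_1^k\times Z_2^{n-k},\,E_1^k\times E_2^{n-k},\,G_k\times G_{n-k}\big),
$$
exactly as in Proposition~\ref{prop:kapranov-product}, the only addition being that the power bundle $(E_1\sqcup E_2)^n$ splits along the same index $k$ (its fibre over a point with $k$ coordinates in $Z_1$ is the external sum of the corresponding fibres of $E_1$ and $E_2$). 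Applying relation~(3) to each summand identifies it with $[(Z_1^k,E_1^k,G_k)]\cdot[(Z_2^{n-k},E_2^{n-k},G_{n-k})]$, and summing over $k$ reproduces the coefficient of $t^n$ in $\zeta_{(Z_1,E_1,G)}(t)\cdot\zeta_{(Z_2,E_2,G)}(t)$; the $\lambda_{\bullet}$ case is identical after removing $\Delta_G$, following Proposition~\ref{prop:lambda-ind}. The only genuinely new point relative to the $\Var$ setting is that the power and restriction operations on equivariant bundles commute with both induction and the disjoint-union decomposition, and this is formal, since $\ind_G^H$ is defined on bundles by the same quotient construction and commutes with fibre products; I therefore expect no real obstacle beyond the bookkeeping of ranks and equivariant structures.
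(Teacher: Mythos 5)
Your proof is correct and takes essentially the same route as the paper: the paper's proof consists precisely of the equations $[(\ind_{G_n}^{H_n}Z^n,\ind_{G_n}^{H_n}E^n,H_n)]=[(Z^n,E^n,G_n)]$ and its diagonal-complement analogue, justified by noting the arguments are almost the same as in Propositions~\ref{prop:kapranov-ind} and~\ref{prop:lambda-ind}, which is exactly your key observation that induction commutes with Cartesian powers of both the variety and the bundle. Your additional treatment of disjoint-union multiplicativity duplicates what the paper states as the separate subsequent proposition, so the difference is purely organizational.
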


\begin{proof}
 This follows from the equations (for $H\supset G$)
\begin{eqnarray*}
 &\ &[(\ind_{G_n}^{H_n}Z^n,\ind_{G_n}^{H_n}E^n,H_n)]=[(Z^n,E^n,G_n)]\,,\\
 &\ &[(\ind_{G_n}^{H_n}Z^n\setminus\Delta_{H_n},\ind_{G_n}^{H_n}E^n_{Z^n\setminus\Delta_{G_n}},H_n)]=
 [(Z^n\setminus\Delta_{G_n},E^n_{Z^n\setminus\Delta_{G_n}},G_n)]\,,
\end{eqnarray*}
 whose proofs are almost the same as in Propositions~\ref{prop:kapranov-ind} and~\ref{prop:lambda-ind}.
\end{proof}

The fact that these series define $\lambda$-structures on the ring $K_0^{\rm fGr}(\Vect)$
follows from the following statement.

\begin{proposition}
 One has
\begin{eqnarray*}
 \zeta_{(Z_1\sqcup Z_2,E_1\sqcup E_2,G)}(t)&=&\zeta_{(Z_1,E_1,G)}(t)\cdot\zeta_{(Z_2,E_2,G)}(t)\,,\\
 \lambda_{(Z_1\sqcup Z_2,E_1\sqcup E_2,G)}(t)&=&\lambda_{(Z_1,E_1,G)}(t)\cdot\lambda_{(Z_2,E_2,G)}(t)\,.
\end{eqnarray*}
\end{proposition}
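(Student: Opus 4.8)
The plan is to imitate the proofs of Proposition~\ref{prop:kapranov-product} and of the first part of Proposition~\ref{prop:lambda-ind} verbatim, carrying the equivariant vector bundle data along the same combinatorial decomposition. First I would treat $\zeta$. The coefficient of $t^n$ in $\zeta_{(Z_1\sqcup Z_2,E_1\sqcup E_2,G)}(t)$ is the class of $((Z_1\sqcup Z_2)^n,(E_1\sqcup E_2)^n,G_n)$. Since $Z_1$ and $Z_2$ are $G$-invariant, the subset of those $n$-tuples having exactly $k$ coordinates in $Z_1$ is $G_n$-invariant and is, exactly as in Proposition~\ref{prop:kapranov-product}, the induced variety $\ind_{G_k\times G_{n-k}}^{G_n}(Z_1^k\times Z_2^{n-k})$. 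This gives the disjoint-union decomposition $((Z_1\sqcup Z_2)^n,G_n)=\bigsqcup_{k=0}^n(\ind_{G_k\times G_{n-k}}^{G_n}(Z_1^k\times Z_2^{n-k}),G_n)$ just as before.

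Second, I would track the bundle. By definition the fibre of $(E_1\sqcup E_2)^n$ over $(x_1,\dots,x_n)$ is the direct sum of the fibres $(E_1\sqcup E_2)_{x_j}$; on the stratum with $k$ coordinates in $Z_1$ this is the external sum of $k$ fibres of $E_1$ and $n-k$ fibres of $E_2$, which over the standard representative $Z_1^k\times Z_2^{n-k}$ is precisely $E_1^k\times E_2^{n-k}$. Because $\ind_G^H E$ is defined so as to restrict to $E$ over the canonical copy of $Z$, the restriction of $(E_1\sqcup E_2)^n$ to this stratum is $\ind_{G_k\times G_{n-k}}^{G_n}(E_1^k\times E_2^{n-k})$. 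Applying relation~(3) in the definition of $K_0^{\rm fGr}(\Vect)$ and then the multiplication rule~(\ref{eq:mult_Vect}) gives
\[
[(\ind_{G_k\times G_{n-k}}^{G_n}(Z_1^k\times Z_2^{n-k},E_1^k\times E_2^{n-k}),G_n)]
=[(Z_1^k,E_1^k,G_k)]\cdot[(Z_2^{n-k},E_2^{n-k},G_{n-k})].
\]
Summing over $k$ identifies the coefficient of $t^n$ with that of the product $\zeta_{(Z_1,E_1,G)}(t)\cdot\zeta_{(Z_2,E_2,G)}(t)$.

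For the $\lambda$-series I would run the identical argument with $Z^n\setminus\Delta_G$ in place of $Z^n$, following the first part of Proposition~\ref{prop:lambda-ind}. The only extra point is the observation already used there: since $Z_1,Z_2$ are $G$-invariant, two coordinates can lie in one $G$-orbit only when they sit in the same piece, so on the $k$-stratum $(Z_1\sqcup Z_2)^n\setminus\Delta_G=(Z_1^k\setminus\Delta_G)\times(Z_2^{n-k}\setminus\Delta_G)$, and $(E_1\sqcup E_2)^n$ restricts to $E_1^k\times E_2^{n-k}$ there. The same three steps then yield the $\lambda$-identity.

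The proof has no genuine obstacle; it is the vector-bundle-decorated copy of the earlier propositions. The one place deserving care is the bundle bookkeeping in the second paragraph: one must check that the external-sum structure of $E^n$ is simultaneously compatible with the orbit stratification and with the induction $\ind_G^H$, i.e. that restricting the induced bundle to the standard copy recovers $E_1^k\times E_2^{n-k}$ as a $G_k\times G_{n-k}$-equivariant bundle. This is a matter of unwinding the definitions of $E^n$ and of $\ind_G^H E$ rather than a substantive difficulty.
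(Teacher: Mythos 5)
Your proposal is correct and follows exactly the route the paper intends: the paper's own ``proof'' of this proposition is just the remark that it is almost the same as Propositions~\ref{prop:kapranov-product} and~\ref{prop:lambda-ind}, and you have written out precisely that argument, with the only new ingredient being the bundle bookkeeping over each stratum, which you handle correctly via relation~(3) and the multiplication rule~(\ref{eq:mult_Vect}).
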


The proof is almost the same as in Propositions~\ref{prop:kapranov-product} and~\ref{prop:lambda-ind}.

The reductions of the series $\zeta_{\bullet}(t)$ and $\lambda_{\bullet}(t)$ under the natural
homomorphism $p^{\rm v}:K_0^{\rm fGr}(\Vect)\to K_0^{\rm fGr}(\Var)$ coincide with the $\lambda$-structures
on $K_0^{\rm fGr}(\Var)$ discussed in Section~\ref{sec:lambda_K^fGr}. Since these two $\lambda$-structures
on the ring $K_0^{\rm fGr}(\Var)$ lead to different power structures, the same holds for the
$\lambda$-structures defined by $\zeta_{\bullet}(t)$ and $\lambda_{\bullet}(t)$ on the ring
$K_0^{\rm fGr}(\Vect)$. Again as in the case of the Grothendieck ring $K_0^{\rm fGr}(\Var)$
the power structure defined by $\zeta_{\bullet}(t)$ is not effective, and the one defined by
$\lambda_{\bullet}(t)$ is. (To show the effectiveness of the power structure defined by the series 
$\lambda_{\bullet}(t)$, one can use an analogue of Equation~(\ref{eq:power-lamdba}).
Equivariant vector bundles over the summands in the right hand side of
it are defined in the obvious way.) The power structures over $K_0^{\rm fGr}(\Vect)$ defined by
the $\lambda$-structures opposite to $\zeta_{\bullet}(t)$ and $\lambda_{\bullet}(t)$ are not
effective.


\begin{thebibliography}{15}

\bibitem{AS} M.~Atiyah, G.~Segal. On equivariant Euler characteristics.
J. Geom. Phys. 6 (1989), no.4, 671--677.

\bibitem{bittner} F.~Bittner.
The universal Euler characteristic for varieties of characteristic zero.
Compos. Math. 140 (2004), no.4, 1011--1032.

\bibitem{BrF} J.~Bryan, J.~Fulman, Orbifold Euler characteristics and the number of commuting $m$-tuples in the symmetric groups.
Ann. Comb. 2 (1998), no. 1, 1--6.

\bibitem{GP} E.~Getzler, R.~Pandharipande.
The Betti numbers of $\overline{\mathcal M}_{0,n}(r,d)$.
J. Algebraic Geom. 15  (2006), {no.4}, 709--732.

\bibitem{GLM-MRL} S.M.~Gusein-Zade, I.~Luengo, A.~Melle-Hern\'andez.
A power structure over the Grothendieck ring of varieties.
Math. Res. Lett. 11 (2004), 49--57.

\bibitem{Stacks} S.M.~Gusein-Zade, I.~Luengo, A.~Melle-Hern\'andez.
On the pre-$\lambda$-ring structure on the Grothendieck ring of stacks
and the power structures over it. Bull. Lond. Math. Soc. 45 (2013), no.3, 520--528.

\bibitem{GPh} S.M.~Gusein-Zade, I.~Luengo, A.~Melle-Hern\'andez.
Higher order generalized Euler characteristics and generating series.
J. Geom. Phys. 95 (2015), 137--143. 

\bibitem{MMJ} S.M.~Gusein-Zade, I.~Luengo, A.~Melle-Hern\'andez.
Equivariant versions of higher order orbifold Euler characteristics.
Mosc. Math. J. 16 (2016), no.4, 751--765.

\bibitem{HH} F.~Hirzebruch, T.~H\"ofer. On the Euler number of an orbifold.
Math. Ann. 286 (1990), no.1--3, 255--260.

\bibitem{Ito-Reid} Y.~Ito, M.~Reid.
The McKay correspondence for finite subgroups of SL(3,C).
In: Higher-dimensional complex varieties (Trento, 1994), de Gruyter, Berlin, 1996, 221--240.

\bibitem{Knutson} D.~Knutson. $\lambda$-rings and the representation theory of the symmetric group.
Lecture Notes in Mathematics, 308, Springer-Verlag, Berlin~-- New York, 1973.

\bibitem{Mazur} J.~Mazur. Rationality of motivic zeta functions for curves with finite 
abelian group actions. J. Pure Appl. Algebra 217 (2013), no.7, 1335--1349.

\bibitem{Tamanoi} H.~Tamanoi. 
Generalized orbifold Euler characteristic of symmetric products and equivariant Morava $K$-theory.
Algebr. Geom. Topol. 1 (2001), 115--141.

\bibitem{Wang} W.~Wang, J.~Zhou. Orbifold Hodge numbers of wreath product orbifolds.
J. Geom. Phys. 38 (2001), 152--169.

\bibitem{Zaslow} E.~Zaslow. Topological orbifold models and quantum cohomology rings.
Comm. Math. Phys. 156 (1993), no.2, 301--331.

\end{thebibliography}
\end{document}